\def\BState{\State\hskip-\ALG@thistlm}
\newtheorem{definition}{Definition}[section]
\newtheorem{remark}{Remark}[section]
\newtheorem{lemma}{Lemma}[section]
\newtheorem{theorem}{Theorem}[section]
\newtheorem{assumption}{Assumption}[section]
\def\QED{~\rule[-1pt]{5pt}{5pt}\par\medskip}
\newenvironment{proof}{{\it  Proof: \ }}{ \hfill \QED}
\title{\LARGE \bf
Distributed Constrained Optimization over Networked Systems via A Singular Perturbation Method 
}
\author{$\textrm{Phuong Huu Hoang}^\dagger$ and $\textrm{Hyo-Sung Ahn}^\dagger$
\thanks{$^\dagger$P. H. Hoang and H.-S. Ahn are with School of Mechanical Engineering,
      Gwangju Institute of Science and Technology, 500-712, Gwangju, Republic of Korea.
      Emails: {\tt\small 
      \{phuonghoang}{\tt\small,hyosung\}@gist.ac.kr}}
}
\begin{document}

\maketitle
\thispagestyle{empty}
\pagestyle{empty}

\begin{abstract}
This paper studies a constrained optimization problem over networked systems with an undirected and connected communication topology. The algorithm proposed in this work utilizes singular perturbation, dynamic average consensus, and saddle point dynamics methods to tackle the problem for a general class of objective function and affine constraints in a fully distributed manner. It is shown that the private information of agents in the interconnected network is guaranteed in our proposed strategy. The theoretical guarantees on the optimality of the solution are provided by rigorous analyses. We apply the new proposed solution into energy networks by a demonstration of two simulations. 
\end{abstract}
\begin{IEEEkeywords}
Distributed optimization, constraints, networked systems, singular perturbation, saddle point dynamics, average consensus. 
\end{IEEEkeywords}
\section{Introduction}
In recent years, distributed interconnected multi-agent systems have drawn a large amount of researchers' attention due to their prospects. Of the problems arising in the distributed networked systems, distributed constrained optimization problem (DCOP), in which the feasible solutions are confined to a certain region, appears in various network decision tasks, including optimal resource allocation problem (ORAP) \cite{Kia2017}, \cite{Yi2016}, \cite{Madan2006}-\cite{Doan2017}, economic dispatch problem (EDP) in power grids \cite{Loia2014}-\cite{Yi2016}, and robot motion planning (RMP) in robotic networks \cite{Bullo2009}, \cite{Zhu2015}. The goal of the distributed constrained optimization problem over the networked systems is to seek the optimal values for each agent in a distributed way such that the overall cost of operation of the systems is minimized while respecting constraints.

Distributed constrained optimization and consensus problem in multi-agent networks has been extensively studied recently, see \cite{Nedic2010}, \cite{Sun2017}-\cite{Wang2017} and references therein. In the optimization problem investigated in the aforementioned works, the agents' task is to cooperatively minimize the total objective cost while honoring constraints and reaching a consensus for all agents.  In \cite{Nedic2010}, Nedi\'{c} $et\textrm{ }al.$ tackle the problem in a discontinuous-time fashion based on a distributed projected subgradient method in which the private information of agents in the interconnected network is not guaranteed. The privacy-guaranteed property is also not kept in the proposed strategies presented in \cite{Sun2017}-\cite{Yang2017}.
Our studied problem, which can be found in ORAP, EDP, and RMP, is slightly different that each agent has its own state in the minimizing process. By formulating the problem as our way, the privacy-guaranteed property can be kept during the interaction in the networked systems.
In the literature, optimization problems are solved by numerous discrete-time paradigms \cite{Boyd2004}, \cite{Burke2008}. Thanks to the  well-developed continuous-time stability theory and various powerful mathematical tools, the continuous-time optimization has seen a resurgence of interest in recent years \cite{Ahn2016}-\cite{Yi2016}, \cite{Ye2017b}-\cite{Kia2015}, \cite{Yang2017}-\cite{Wang2017}. 
The idea of using continuous-time saddle point dynamics to find the optimal value of constrained optimization problem has been recently proposed in the works \cite{Durr2011}-\cite{Durr2012}. In these works, a dynamical system is constructed to seek saddle points of Lagrange functions yielding the optimal values. The works, however, consider the problem in a centralized way without equality constraints. Note that in distributed approaches, since each agent only knows its own and its neighbors' information, it is considered more challenging.

As mentioned previously, applications of our studied DCOP are ORAP \cite{Kia2017}, \cite{Yi2016}, \cite{Madan2006}-\cite{Doan2017} and EDP \cite{Loia2014}-\cite{Yi2016}. In \cite{Kia2017}, \cite{Doan2017}, inequality constraints are not of their interests, which is unrealistic in practice, while the objective functions are confined to the quadratic function class in \cite{Loia2014}-\cite{Garcia2012}. In addition, the work \cite{Loia2014} does not provide rigorous theoretical guarantees on the optimality of the solution. There are works investigating more general classes of objective function such as strictly convex functions in \cite{Cherukuri2016}-\cite{Yi2016}.
To tackle the EDP, Cherukuri and  Cort\'{e}s \cite{Cherukuri2016} propose an algorithm in which they modify the original problem into an equivalent one and assume that there exists one node which can obtain the total load capacity of the power network. Moreover, the coincidence of solutions to the original problem and modified one is guaranteed  by  examining a parameter, namely $\epsilon$, which is computed by obtaining information of all agents. The strategy presented in \cite{Yun2017} also encounters a similar drawback in solving the EDP that the convergence depends on a parameter, namely $k$, which is global information. Such aforementioned flaws in \cite{Cherukuri2016}-\cite{Yun2017} make the authors' algorithms seem to be ostensibly distributed. In \cite{Yi2016}, a projection-based method is proposed and discussed to tackle the ORAP. The method is non-smooth and may not be beneficial in terms of computation, projections on complicated sets for instance \cite{SLee2016}.

In this work, we propose a new fully distributed solution to solve the widely-applied DCOP with affine constraints. We consider to tackle the problem in a distributed and smooth manner with privacy-guaranteed property utilizing the saddle point dynamics idea \cite{Durr2011}-\cite{Durr2012}, singular perturbation method \cite{Tan2006}, and dynamic average consensus algorithm \cite{Freeman2006}. Moreover, we take advantage of multi-time-scale property of the singular perturbation method \cite{Khalil2002}, \cite{Tan2006} to design our algorithm.
In essence, our work that can be considered to be a generalization of ORAP and EDP studies strictly convex cost functions with equality and inequality constraints. By confining the objective functions to strictly convex ones, we can obtain the uniqueness of optimal solutions.
Additionally, non-local stability is provided by the proposed algorithm. We rely upon the well-developed singular perturbation theory to provide rigorous theoretical guarantees of our proposed strategy.

The remainder of this paper is organized as follows. In Section II, we provide some notations used throughout this work and for the sake of completeness we briefly present graph theory and dynamic average consensus algorithm used in this paper. We formulate the DCOP in Section III with some assumptions and provide some standard results in convex analysis. Section IV is dedicated to present our distributed solutions along with analyses. We demonstrate the correctness of the new approach by numerical simulations in energy networks in Section V. Section VI ends this paper with conclusions and future directions.
\section{Preliminaries}
We first present notations and basic concepts. 
Let ${\mathbb{R}}$, ${\mathbb{R}}_{\geq 0}$, ${\mathbb{R}}_{> 0}$, $\mathbb{N}$, and $\mathbb{Z}_{\geq 1}$ correspondingly denote the real, non-negative real, positive real, non-negative integer, and positive integer numbers. In addition, $\mathbb{R}_{> \mathbf{v}}^n$ denotes the set of all vectors in $\mathbb{R}^n$ with components greater than the respective components of $\mathbf{v}\in \mathbb{R}^n$. 
Let $\mathbf{I}_n$ represent for the $n \times n$ identity matrix. A matrix $\mathbf{A} \in \mathbb{R}^{n \times n}$ is denoted to be negative definite by $\mathbf{A}\prec 0$ (resp. semi-negative definite $\mathbf{A}\preceq 0$). Let $\otimes$ represent the Kronecker product operator and the superscript $\top$  denotes a transpose of a matrix or a vector. Let $\mathbf{1}_n=[1,...,1]^\top \in \mathbb{R}^n$, while $\mathbf{0}$ represents for all-zero-entry vectors with an appropriate dimension. Given a vector $\mathbf{x}=[x_1,...,x_n]^\top$, $\overline{x}$ represents for $\frac{1}{n}\sum_{i=1}^nx_i$. Additionally, $\frac{\partial f}{\partial x}(y)$ means the derivative of function $f$ with respect to $x$, and then replacing the variable $x$ by $y$. 

We continue to present some basic graph theory \cite{Bullo2009}. A graph is a triplet ${\cal G}=({\cal V}, {\cal E},\mathbf{A})$, where ${\cal V}=\{1,...,n\}$ is the node set and ${\cal E} \subseteq {\cal V}\times {\cal V}$ is the edge set. The adjacency matrix $\mathbf{A}=[a_{ij}] \in \mathbb{R}^{n\times n}$ is defined as $a_{ij}=1$ if node $j$ is connected to node $i$, else, $a_{ij}=0$. The graph is undirected if for every $(i,j) \in {\cal E}$, $(j,i) \in {\cal E}$. An undirected graph is connected if there exists a path between any pair of distinct vertices. The neighboring set of agent $i$ is defined as ${\cal N}_i=\{ j \in {\cal V}|(j,i) \in {\cal E}\}$. The Laplacian matrix $\mathbf{L}=[l_{ij}]$ for the graph is defined as $\mathbf{L}=\mathbf{D}-\mathbf{A}$, where $\mathbf{D}$ is a diagonal matrix whose $i-$th diagonal element is equal to $\sum_{j=1}^na_{ij}$.

We then briefly review the dynamic average consensus algorithm presented in \cite{Freeman2006}.
Let ${\cal G}$ be an undirected and connected graph and $\mathbf{L}$ be its Laplacian matrix. Then, for any constant $\mathbf{u}\in \mathbb{R}^n$, the state of the following system:
\begin{equation*}
\begin{bmatrix}
\dot{\boldsymbol{\xi}}\\
\dot{\boldsymbol{\zeta}}
\end{bmatrix}
=
\begin{bmatrix}
-\mathbf{I}_n-\mathbf{L}& -\mathbf{L}\\
\mathbf{L} &\mathbf{0}
\end{bmatrix} \begin{bmatrix}
\boldsymbol{\xi}\\
\boldsymbol{\zeta}
\end{bmatrix}
+
\begin{bmatrix}
\mathbf{u}\\
\mathbf{0}
\end{bmatrix},
\end{equation*}
with arbitrary initial conditions $\boldsymbol{\xi}(0), \boldsymbol{\zeta}(0) \in \mathbb{R}^n$ remains bounded and $\boldsymbol{\xi}(t)$ converges exponentially to $\frac{1}{n}\mathbf{1}_n^\top \mathbf{u} \mathbf{1}_n$ as $t \rightarrow \infty$.\\
\section{Problem Formulation}
We consider a set of $n \in \mathbb{Z}_{\geq 1}$ agents communicating over an undirected and connected graph $\mathcal{G}=(\mathcal{V},\mathcal{E},\mathbf{A})$. Each agent is represented by a corresponding vertex in the graph. Let $x_i\in \mathbb{R}$ be the state of agent $i$. The objective function of agent $i$ is measured by $f_i(x_i): \mathbb{R}\rightarrow \mathbb{R}$ assumed to be strictly convex and continuously differentiable. Each agent can measure only its own objective function values and the derivative values of the function. The states of agents in the networked system are confined by $l \in \mathbb{N}$ equality constraints $h_e(\mathbf{x})=0$, $e \in {\cal H}=\{1,...,l\}$. In addition, agent $i$ has $m_i \in \mathbb{N}$ local inequality constraints $g_{i{j_i}}({x}_i) \leq 0, j_i \in {\mathcal{G}}_i=\{1,...,m_i\}$. If the set $\cal H$ is empty, i.e., ${\mathcal{H}}=\emptyset$, then there is no equality constraint in the networked system. Similarly, $\mathcal{G}_i=\emptyset$ means agent $i$ does not have any inequality constraint. In this work, we assume that the constraints are affine such that $h_e(\mathbf{x})=\sum_{i=1}^n(a^h_{ie}x_i+b^h_{ie})$ and $g_{ij_i}(x_i)=a_{ij_i}^gx_i+b_{ij_i}^g$, where $a_{ie}^h, b_{ie}^h, a_{ij_i}^g$, and $b_{ij_i}^g\in \mathbb{R}$. The considered problem can be viewed as a generalization of the EDP \cite{Loia2014}-\cite{Yi2016}, in which its supply-demand balance is an equality constraint and limit capacity constraints are inequality ones, and the ORAP \cite{Kia2017}, \cite{Yi2016}, \cite{Doan2017}. Compared to \cite{Kia2017} and \cite{Doan2017}, our work considers inequality constraints which is more challenging. The agents aim to cooperatively minimize the total cost $\sum_{i=1}^nf_i(x_i)$ while respecting the constraints. Let  $\mathbf{x}=[{x}_1,...,{x}_n]^\top\in \mathbb{R}^{n}$. We mathematically state the DCOP as
\begin{subequations}\label{GeneralProblem}
\begin{align}
&\textrm{minimize } f(\mathbf{x})=\sum_{i=1}^n  f_i({x}_i),\\
\text{s.t. }
             &h_e(\mathbf{x})=0, \textrm{ } e \in {\mathcal{H}},\\            
             &g_{i{j_i}}({x}_i) \leq 0,\textrm{ } i \in {\mathcal{V}} \textrm{ and }  j_i \in {\mathcal{G}_i}.
\end{align}
\end{subequations}
We define the associated Lagrangian of the optimization problem (\ref{GeneralProblem}) as
\begin{equation}\label{Lagrangian}
\begin{split}
\mathfrak{L}(\mathbf{x},\boldsymbol{\mu}, \boldsymbol{\lambda})=&f(\mathbf{x})+\sum_{e=1}^l\mu_eh_e(\mathbf{x})\\
&+\sum_{i=1}^n \sum_{j_i=1}^{m_i} \lambda_{ij_i}g_{ij_i}(x_i),
\end{split}
\end{equation}
where $\mu_e \in \mathbb{R}$, $e\in {\cal H}$,  $\lambda_{ij_i}\in \mathbb{R}_{\geq 0}$, $i \in {\cal V}$ and $j_i \in {\cal G}_i$, are Lagrange multipliers and $\boldsymbol{\mu}=[\mu_1,...,\mu_l]^\top \in \mathbb{R}^l$ and $\boldsymbol{\lambda}=[\lambda_{11},...,\lambda_{ij_i},...,\lambda_{nm_n}]^\top \in \mathbb{R}^{\sum_{i=1}^nm_i}_{\geq 0}$. It is well-known that the Lagrange multiplier $\lambda_{ij_i}$ is non-negative \cite{Durr2011}-\cite{Durr2012}, \cite{Boyd2004}, \cite{Burke2008}.\\
We next present some standard results from convexity and optimization in the literature \cite{Durr2011}-\cite{Burke2008}. Let $\mathbf{x}, \mathbf{y} \in \mathbb{R}^n$ and $f: \mathbb{R}^n\rightarrow \mathbb{R}$. The following statements are equivalent for $f\in C^2$:\\
\indent 1) $f$ is convex.\\
\indent 2) $\nabla f(\mathbf{x})^\top(\mathbf{y}-\mathbf{x})\leq f(\mathbf{y})-f(\mathbf{x})$.\\
\indent 3) $\nabla^2f(\mathbf{x})\geq 0$.\\
Moreover for $\mathbf{x}\neq \mathbf{y}$, $f$ is strictly convex $\Leftrightarrow $ $\nabla f(\mathbf{x})^\top(\mathbf{y}-\mathbf{x})< f(\mathbf{y})-f(\mathbf{x})$. Additionally, $\nabla^2f(\mathbf{x})> 0$ implies strict convexity of $f$. Furthermore, if $f(x): \mathbb{R}\rightarrow \mathbb{R}$ is continuously differentiable and  strictly convex, then $\frac{\partial f}{\partial x}(x)$ is a strictly increasing function in $x$.
\begin{definition} (Saddle point definition)
The saddle point $(\mathbf{x}^*,\mathbf{y}^*)\in {\cal X}\times {\cal Y}$, where ${\cal X} \subseteq \mathbb{R}^n$ and ${\cal Y} \subseteq \mathbb{R}^m$, of function $f(\mathbf{x},\mathbf{y})$ is a point on which 
$f(\mathbf{x}^*,\mathbf{y})\leq f(\mathbf{x}^*,\mathbf{y}^*) \leq f(\mathbf{x},\mathbf{y}^*),$
for all $\mathbf{x} \in {\cal X}$ and $\mathbf{y} \in {\cal Y}$.
\end{definition}

We denote ${(\mathbf{x}^*,\boldsymbol{\mu}^*,\boldsymbol{\lambda}^*)}$, where $\mathbf{x}^*=[x_1^*,...,x_n^*]^\top \in \mathbb{R}^n$, $\boldsymbol{\mu}^*=[\mu_1^*,...,\mu_l^*]^\top\in \mathbb{R}^l$,  $\boldsymbol{\lambda}^*=[\lambda_{11}^*,...,\lambda_{ij_i}^*,...,\lambda_{nm_n}^*]^\top \in \mathbb{R}^{\sum_{i=1}^n m_i}_{\geq 0}$, as a saddle point of $\mathfrak{L}(\mathbf{x},{\boldsymbol{\mu}},{\boldsymbol{\lambda}})$. The saddle point satisfies $\mathfrak{L}(\mathbf{x}^*,\boldsymbol{\mu},\boldsymbol{\lambda})\leq \mathfrak{L}(\mathbf{x}^*,\boldsymbol{\mu}^*,\boldsymbol{\lambda}^*)\leq \mathfrak{L}(\mathbf{x},\boldsymbol{\mu}^*,\boldsymbol{\lambda}^*)$ for all $(\mathbf{x},\boldsymbol{\mu},\boldsymbol{\lambda}) \in \mathbb{R}^n\times \mathbb{R}^l \times \mathbb{R}^{\sum_{i=1}^n m_i}_{\geq 0}$. The DCOP outlined in (\ref{GeneralProblem}) is said to be satisfied the Slater condition qualification if there exists some feasible primal solution $\mathbf{x}^*=(x_1^*,...,x_n^*)\in \mathbb{R}^n$ at which $g_{ij_i}(x_i^*)<0$ and $h_e(\mathbf{x}^*)=0$, $i \in {\cal V}$, $j_i \in {\cal G}_i$, and $ e \in {\cal H}$. 
We now state the following two theorems \cite{Durr2011}-\cite{Burke2008} which are used in our paper.
\begin{theorem} \label{theorem1}
Let $f_i$, for all $i \in {\cal V}$, be convex. Let $\mathbf{x}^*\in \mathbb{R}^n$. If there exist $\boldsymbol{\mu}^* \in \mathbb{R}^{l}$ and $\boldsymbol{\lambda}^* \in \mathbb{R}^{\sum_{i=1}^n m_i}_{\geq 0}$ such that $(\mathbf{x}^*,\boldsymbol{\mu}^*,\boldsymbol{\lambda}^*)$ is a saddle point for the Lagrangian $\mathfrak{L}(\mathbf{x},\boldsymbol{\mu},\boldsymbol{\lambda})$ in (\ref{Lagrangian}), then $\mathbf{x}^*$ solves (\ref{GeneralProblem}). Conversely, if $\mathbf{x}^*$ is a solution to (\ref{GeneralProblem}) at which the Slater condition qualification is satisfied, then there exist $\boldsymbol{\mu}^* \in \mathbb{R}^{l}$ and $\boldsymbol{\lambda}^* \in \mathbb{R}^{\sum_{i=1}^n m_i}_{\geq 0}$  such that $(\mathbf{x}^*,\boldsymbol{\mu}^*,\boldsymbol{\lambda}^*)$ is a saddle point for the Lagrangian $\mathfrak{L}(\mathbf{x},\boldsymbol{\mu},\boldsymbol{\lambda})$.
\end{theorem}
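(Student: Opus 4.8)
The plan is to establish the two implications separately. The sufficiency direction (saddle point $\Rightarrow$ solution) follows by elementary manipulation of the saddle inequalities and requires neither convexity nor the Slater condition, whereas the necessity direction (solution $\Rightarrow$ saddle point) is the genuinely hard KKT-type statement and will rest on a separating hyperplane argument that uses convexity of $f$, affineness of the constraints, and the Slater condition together. Throughout I set $p=\sum_{i=1}^n m_i$.

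First I would prove sufficiency. Writing out the left saddle inequality $\mathfrak{L}(\mathbf{x}^*,\boldsymbol{\mu},\boldsymbol{\lambda})\le\mathfrak{L}(\mathbf{x}^*,\boldsymbol{\mu}^*,\boldsymbol{\lambda}^*)$ and cancelling the common term $f(\mathbf{x}^*)$ gives $\sum_e\mu_e h_e(\mathbf{x}^*)+\sum_{i,j_i}\lambda_{ij_i}g_{ij_i}(x_i^*)\le\sum_e\mu_e^* h_e(\mathbf{x}^*)+\sum_{i,j_i}\lambda_{ij_i}^* g_{ij_i}(x_i^*)$ for every $\boldsymbol{\mu}\in\mathbb{R}^l$ and $\boldsymbol{\lambda}\in\mathbb{R}^p_{\ge 0}$. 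Sending each $\mu_e$ to $\pm\infty$ forces $h_e(\mathbf{x}^*)=0$, and sending each $\lambda_{ij_i}$ to $+\infty$ forces $g_{ij_i}(x_i^*)\le 0$, so $\mathbf{x}^*$ is feasible. Taking $(\boldsymbol{\mu},\boldsymbol{\lambda})=\mathbf{0}$ then yields $\sum_{i,j_i}\lambda_{ij_i}^* g_{ij_i}(x_i^*)\ge 0$, while $\boldsymbol{\lambda}^*\in\mathbb{R}^p_{\ge 0}$ and $g_{ij_i}(x_i^*)\le 0$ give the reverse inequality; hence the complementary slackness identity $\sum_{i,j_i}\lambda_{ij_i}^* g_{ij_i}(x_i^*)=0$ holds and $\mathfrak{L}(\mathbf{x}^*,\boldsymbol{\mu}^*,\boldsymbol{\lambda}^*)=f(\mathbf{x}^*)$. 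Evaluating the right saddle inequality at an arbitrary feasible $\mathbf{x}$ then gives $f(\mathbf{x}^*)\le f(\mathbf{x})+\sum_{i,j_i}\lambda_{ij_i}^* g_{ij_i}(x_i)\le f(\mathbf{x})$, so $\mathbf{x}^*$ minimizes $f$ over the feasible set and solves (\ref{GeneralProblem}).

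For the converse I would work in the value space $\mathbb{R}\times\mathbb{R}^l\times\mathbb{R}^p$ and define the achievable set $\mathcal{A}=\{(r,\mathbf{s},\mathbf{t}):\exists\,\mathbf{x}\in\mathbb{R}^n\text{ with }f(\mathbf{x})\le r,\ h_e(\mathbf{x})=s_e\ \forall e,\ g_{ij_i}(x_i)\le t_{ij_i}\ \forall i,j_i\}$, which is convex because $f$ is convex and the $h_e,g_{ij_i}$ are affine. Optimality of $\mathbf{x}^*$ means $\mathcal{A}$ is disjoint from the ray $\{(r,\mathbf{0},\mathbf{0}):r<f(\mathbf{x}^*)\}$, so the separating hyperplane theorem supplies a nonzero normal $(\beta,\boldsymbol{\mu}^*,\boldsymbol{\lambda}^*)$; since $\mathcal{A}$ is unbounded upward in its $r$- and $\mathbf{t}$-coordinates, necessarily $\beta\ge 0$ and $\boldsymbol{\lambda}^*\in\mathbb{R}^p_{\ge 0}$, and the separation reads $\beta f(\mathbf{x})+\sum_e\mu_e^* h_e(\mathbf{x})+\sum_{i,j_i}\lambda_{ij_i}^* g_{ij_i}(x_i)\ge\beta f(\mathbf{x}^*)$ for all $\mathbf{x}$. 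The Slater condition is precisely what excludes the degenerate case $\beta=0$: with $\beta=0$, evaluating at a strictly feasible Slater point (where every $g_{ij_i}<0$ and $h_e=0$) forces $\boldsymbol{\lambda}^*=\mathbf{0}$ and then, using that the affine maps $h_e$ take both signs over $\mathbb{R}^n$, $\boldsymbol{\mu}^*=\mathbf{0}$, contradicting nontriviality. Hence $\beta>0$, and normalizing $\beta=1$ gives $\mathfrak{L}(\mathbf{x},\boldsymbol{\mu}^*,\boldsymbol{\lambda}^*)\ge f(\mathbf{x}^*)$ for all $\mathbf{x}$; setting $\mathbf{x}=\mathbf{x}^*$ recovers complementary slackness and $\mathfrak{L}(\mathbf{x}^*,\boldsymbol{\mu}^*,\boldsymbol{\lambda}^*)=f(\mathbf{x}^*)$, from which both saddle inequalities follow exactly as in the sufficiency step.

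The main obstacle is the necessity direction, and within it the justification that $\beta\neq 0$ through the Slater condition, together with the bookkeeping of the affine equality constraints, whose multipliers $\boldsymbol{\mu}^*$ are sign-unrestricted and whose elimination in the degenerate case also requires the equality constraints to be non-redundant (linearly independent rows). Verifying that the normal vector produced by the separation yields \emph{both} saddle inequalities rather than only KKT stationarity, and handling the closure/boundary technicalities of $\mathcal{A}$ in the hyperplane theorem, are the remaining delicate points; the sufficiency direction, by comparison, is entirely elementary once complementary slackness has been extracted.
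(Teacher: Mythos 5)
The paper itself contains no proof of Theorem~\ref{theorem1}: it is quoted as a known result, with the proof deferred to the cited references \cite{Durr2011}--\cite{Burke2008}. So your proposal can only be measured against the classical argument in those references (essentially the Boyd--Vandenberghe proof of strong duality, specialized to affine constraints), and it reproduces that argument faithfully. Your sufficiency half is complete and correct as written: letting the multipliers blow up in the left saddle inequality forces feasibility of $\mathbf{x}^*$, the choice $(\boldsymbol{\mu},\boldsymbol{\lambda})=\mathbf{0}$ combined with $\boldsymbol{\lambda}^*\geq\mathbf{0}$ and $g_{ij_i}(x_i^*)\leq 0$ yields $\sum_{i}\sum_{j_i}\lambda_{ij_i}^*g_{ij_i}(x_i^*)=0$, and the right saddle inequality then gives optimality over the feasible set; as you note, this direction uses neither convexity nor Slater. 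Your necessity half (convex achievable set $\mathcal{A}$, separation from the ray $\{(r,\mathbf{0},\mathbf{0}):r<f(\mathbf{x}^*)\}$, sign information on $\beta$ and $\boldsymbol{\lambda}^*$ from upward unboundedness, Slater to exclude $\beta=0$, then recovery of both saddle inequalities from $\mathfrak{L}(\mathbf{x},\boldsymbol{\mu}^*,\boldsymbol{\lambda}^*)\geq f(\mathbf{x}^*)$) is also the standard route and is correctly assembled.

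The one genuine loose end is the point you flag but do not close: killing the degenerate normal when the equality constraints are redundant. With $\beta=0$, the Slater point forces only $\boldsymbol{\lambda}^*=\mathbf{0}$, after which the separation inequality says $\sum_{e=1}^l\mu_e^*h_e(\mathbf{x})\geq\alpha\geq 0$ for all $\mathbf{x}\in\mathbb{R}^n$. An affine function bounded below on $\mathbb{R}^n$ must be constant, so this yields only $\sum_{e=1}^l\mu_e^*\mathbf{a}_e^h=\mathbf{0}$, not $\boldsymbol{\mu}^*=\mathbf{0}$. If the vectors $\mathbf{a}_e^h$ are linearly dependent --- say $l\geq 2$ and $h_1\equiv h_2$ --- then $\beta=0$, $\boldsymbol{\mu}^*=[1,-1,0,\ldots,0]^\top$, $\boldsymbol{\lambda}^*=\mathbf{0}$ is a perfectly valid (non-proper) separator, since both $\mathcal{A}$ and the ray lie inside the corresponding hyperplane; no contradiction with nontriviality arises and your argument stalls there. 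The theorem as stated carries no independence hypothesis (the paper's Assumption~\ref{assumption3} on $\boldsymbol{\Psi}$ is a separate assumption, not invoked in Theorem~\ref{theorem1}), whereas the textbook proof you are following explicitly assumes full row rank of the equality-constraint matrix. The repair is cheap and worth writing down: because the Slater point solves the affine system $h_e(\mathbf{x})=0$, $e\in{\cal H}$, that system is consistent, so any dependent equality constraint is an exact consequence of the remaining ones and defines the same feasible set; discard a maximal dependent subset, run your separation argument on the reduced problem (whose equality gradients are linearly independent), and extend the resulting $\boldsymbol{\mu}^*$ by zeros on the discarded constraints. The triple so obtained satisfies both saddle inequalities for the original Lagrangian, and with this patch your proposal is a complete and correct proof.
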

\begin{theorem} \label{theorem2} Let $f_i$, for all $i \in {\cal V}$, be convex. Then, $(\mathbf{x}^*,\boldsymbol{\mu}^*,\boldsymbol{\lambda}^*) \in \mathbb{R}^n\times \mathbb{R}^l \times \mathbb{R}^{\sum_{i=1}^n m_i}_{\geq 0}$ is a saddle point of $\mathfrak{L}$ in (\ref{Lagrangian}) if and only if the following conditions are satisfied for $i \in {\cal V}$, $e \in {\cal H}$, and $j_i \in {\cal G}_i$:
\begin{subequations}\label{eq:KKT}
    \begin{align}
    &\nabla f(\mathbf{x}^*)+ \sum_{e=1}^l \mu_e^* \mathbf{a}_e^h+{[\sum_{j_i=1}^{m_i} {\lambda}_{ij_i}^* {a}_{ij_i}^g]_{vec}} =\mathbf{0}, \label{eq:KKT1_1}\\   
       & \sum_{i=1}^n(a_{ie}^hx_i^*+b_{ie}^h)=0,\textrm{ } \lambda^*_{ij_i}\geq 0, \label{eq:KKT1_2}\\
       & a_{ij_i}^g x_i^* + b_{ij_i}^g \leq 0, \label{eq:KKT1_3}\\
       &\lambda_{ij_i}^* ( a_{ij_i}^gx_i^*+b_{ij_i}^g)=0, \label{eq:KKT1_4}     
    \end{align}
\end{subequations}
where $\mathbf{a}_e^h=[a_{1e}^h,...,a_{ne}^h]^\top $ and ${[\sum_{j_i=1}^{m_i} {\lambda}_{ij_i}^* {a}_{ij_i}^g]_{vec}}=[\sum_{j_1=1}^{m_1} {\lambda}_{1j_1}^* {a}_{1j_1}^g,...,\sum_{j_n=1}^{m_n} {\lambda}_{nj_n}^* {a}_{nj_n}^g]^\top$.\end{theorem}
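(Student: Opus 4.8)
The plan is to establish the biconditional by decomposing the two-sided saddle inequality into its two halves and matching each half to a subset of the KKT conditions in (\ref{eq:KKT}). Writing the saddle-point relation $\mathfrak{L}(\mathbf{x}^*,\boldsymbol{\mu},\boldsymbol{\lambda})\leq \mathfrak{L}(\mathbf{x}^*,\boldsymbol{\mu}^*,\boldsymbol{\lambda}^*)\leq \mathfrak{L}(\mathbf{x},\boldsymbol{\mu}^*,\boldsymbol{\lambda}^*)$, the right inequality asserts that $\mathbf{x}^*$ minimizes the map $F(\mathbf{x}):=\mathfrak{L}(\mathbf{x},\boldsymbol{\mu}^*,\boldsymbol{\lambda}^*)$ over $\mathbb{R}^n$, while the left inequality asserts that $(\boldsymbol{\mu}^*,\boldsymbol{\lambda}^*)$ maximizes $\mathfrak{L}(\mathbf{x}^*,\cdot,\cdot)$ over $\mathbb{R}^l\times\mathbb{R}^{\sum_{i=1}^n m_i}_{\geq 0}$. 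A structural observation I will use repeatedly is that, because the constraints $h_e$ and $g_{ij_i}$ are affine, $F$ is the sum of the convex, differentiable function $f$ and affine terms, hence convex and differentiable in $\mathbf{x}$, with gradient $\nabla F(\mathbf{x}^*)$ equal to the left-hand side of (\ref{eq:KKT1_1}).

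For sufficiency ($\Leftarrow$), assume (\ref{eq:KKT}) holds. The right inequality follows from the first-order convexity characterization (statement 2 of the convexity facts above) applied to $F$: since $\nabla F(\mathbf{x}^*)=\mathbf{0}$ by (\ref{eq:KKT1_1}), we obtain $0=\nabla F(\mathbf{x}^*)^\top(\mathbf{x}-\mathbf{x}^*)\leq F(\mathbf{x})-F(\mathbf{x}^*)$ for every $\mathbf{x}$. For the left inequality I would compute the difference directly,
\begin{align*}
&\mathfrak{L}(\mathbf{x}^*,\boldsymbol{\mu}^*,\boldsymbol{\lambda}^*)-\mathfrak{L}(\mathbf{x}^*,\boldsymbol{\mu},\boldsymbol{\lambda})\\
&=\sum_{e=1}^l(\mu_e^*-\mu_e)h_e(\mathbf{x}^*)+\sum_{i=1}^n\sum_{j_i=1}^{m_i}(\lambda_{ij_i}^*-\lambda_{ij_i})g_{ij_i}(x_i^*),
\end{align*}
and then annihilate the first sum using the equality feasibility $h_e(\mathbf{x}^*)=0$ from (\ref{eq:KKT1_2}), discard the $\lambda_{ij_i}^*g_{ij_i}(x_i^*)$ terms by complementary slackness (\ref{eq:KKT1_4}), and read off nonnegativity of the remainder $-\sum_{i,j_i}\lambda_{ij_i}g_{ij_i}(x_i^*)$ from $\lambda_{ij_i}\geq 0$ together with $g_{ij_i}(x_i^*)\leq 0$ in (\ref{eq:KKT1_3}).

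For necessity ($\Rightarrow$), assume $(\mathbf{x}^*,\boldsymbol{\mu}^*,\boldsymbol{\lambda}^*)$ is a saddle point. The right inequality makes $\mathbf{x}^*$ an unconstrained minimizer of the convex differentiable $F$, so first-order optimality gives $\nabla F(\mathbf{x}^*)=\mathbf{0}$, which is precisely (\ref{eq:KKT1_1}); the sign constraint $\lambda_{ij_i}^*\geq 0$ in (\ref{eq:KKT1_2}) holds by definition of the saddle-point domain. The remaining conditions I would extract from the left inequality, which after cancelling $f(\mathbf{x}^*)$ reads
\begin{align*}
&\sum_{e}\mu_e h_e(\mathbf{x}^*)+\sum_{i,j_i}\lambda_{ij_i}g_{ij_i}(x_i^*)\\
&\qquad\leq \sum_{e}\mu_e^* h_e(\mathbf{x}^*)+\sum_{i,j_i}\lambda_{ij_i}^*g_{ij_i}(x_i^*)
\end{align*}
for all admissible $(\boldsymbol{\mu},\boldsymbol{\lambda})$. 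Since each $\mu_e$ ranges over all of $\mathbb{R}$, boundedness of the left side forces $h_e(\mathbf{x}^*)=0$ (otherwise sending $\mu_e\to\pm\infty$ blows it up), giving the equality feasibility in (\ref{eq:KKT1_2}); sending a single $\lambda_{ij_i}\to+\infty$ then forces $g_{ij_i}(x_i^*)\leq 0$, i.e. (\ref{eq:KKT1_3}); and finally evaluating the inequality at $\boldsymbol{\lambda}=\mathbf{0}$ yields $\sum_{i,j_i}\lambda_{ij_i}^*g_{ij_i}(x_i^*)\geq 0$, which combined with the termwise bound $\lambda_{ij_i}^*g_{ij_i}(x_i^*)\leq 0$ forces each product to vanish, establishing the complementary slackness (\ref{eq:KKT1_4}).

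The routine computations (the gradient of $F$ and the difference identity) are mechanical; the step I expect to require the most care is the necessity direction, where feasibility is recovered by exploiting the freedom to drive the multipliers to infinity, and where the sign structure $\lambda^*\geq 0$, $g\leq 0$ must be tracked to convert a single aggregate inequality into the separate pointwise equalities of complementary slackness. Convexity of $f$ is what guarantees that the first-order condition $\nabla F(\mathbf{x}^*)=\mathbf{0}$ is not merely necessary but also sufficient for the inner minimization, so that both directions of the equivalence close cleanly.
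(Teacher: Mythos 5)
Your proof is correct, but there is nothing in the paper to compare it against: the paper does not prove Theorem~\ref{theorem2} at all, stating it as a known result and delegating the proof to the cited references (D\"{u}rr et al.\ and the convex-optimization texts of Boyd--Vandenberghe and Burke). Measured against the standard literature argument, yours is essentially that argument, executed completely: the right saddle inequality is equivalent to unconstrained minimality of the convex, differentiable map $\mathbf{x}\mapsto\mathfrak{L}(\mathbf{x},\boldsymbol{\mu}^*,\boldsymbol{\lambda}^*)$, which under the paper's affine constraints is exactly stationarity (\ref{eq:KKT1_1}); the left inequality, after cancelling $f(\mathbf{x}^*)$, is a linear inequality in $(\boldsymbol{\mu},\boldsymbol{\lambda})$ over $\mathbb{R}^l\times\mathbb{R}^{\sum_{i=1}^n m_i}_{\geq 0}$, and your unboundedness argument (drive $\mu_e\to\pm\infty$, then a single $\lambda_{ij_i}\to+\infty$, then set $\boldsymbol{\lambda}=\mathbf{0}$) correctly extracts feasibility (\ref{eq:KKT1_2}), (\ref{eq:KKT1_3}) and complementary slackness (\ref{eq:KKT1_4}), using that a sum of nonpositive terms which is nonnegative must vanish termwise. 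The converse direction is also sound: (\ref{eq:KKT1_1}) plus the first-order convexity inequality gives the right saddle inequality, and the sign structure $\lambda_{ij_i}\geq 0$, $g_{ij_i}(x_i^*)\leq 0$ together with $h_e(\mathbf{x}^*)=0$ and slackness gives the left one. Two minor points worth noting: the first-order characterization of convexity you invoke requires only $f\in C^1$, which matches the paper's standing assumption of continuous differentiability (the paper's list of convexity facts is stated for $f\in C^2$, but that restriction is not needed for the inequality you use); and in the necessity direction convexity of $f$ plays no role in obtaining $\nabla F(\mathbf{x}^*)=\mathbf{0}$ --- plain first-order necessity at an unconstrained minimum suffices --- convexity being needed only for sufficiency, exactly as your closing remark states.
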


The conditions (\ref{eq:KKT1_1})-(\ref{eq:KKT1_4}) are referred to  Karush-Kuhn-Tucker (KKT) optimality conditions \cite{Durr2012}, \cite{Boyd2004}. Moreover, condition (\ref{eq:KKT1_4}) is known as complementary slackness in the sense of $ \lambda^*_{ij_i} \neq 0 \Longrightarrow ( a_{ij_i}^gx_i^*+b_{ij_i}^g)=0$ and $( a_{ij_i}^gx_i^*+b_{ij_i}^g)\neq 0 \Longrightarrow \lambda_{ij_i}^*=0$.

We continue by stating some assumptions in this work.
\begin{assumption} \label{assumption1}
The objective cost function $f_i$, for all $i\in {\cal V}$, is strictly convex and continuously differentiable.
\end{assumption}

Let ${\cal S}=\{ \mathbf{x}\in \mathbb{R}^n| g_{ij_i}(x_i) \leq 0, i\in {\cal V},j_i \in {\cal G}_i, h_e(\mathbf{x})=0, e \in {\cal H} \}$ be the feasible set and ${\cal S}^*$ be the set of solutions for the problem  (\ref{GeneralProblem}). 
\begin{assumption} \label{assumption2} The set ${\cal S}$ is nonempty and there exists $\mathbf x \in {\cal S}$ such that the Slater's condition is satisfied.
\end{assumption}

Let $\boldsymbol{\psi}_e^h=[\frac{\partial h_e}{\partial x_1}(\mathbf{x}),...,\frac{\partial h_e}{\partial x_n}(\mathbf{x})]^\top \in \mathbb{R}^n, \textrm{ } e \in {\cal H}$, and $\boldsymbol{\psi}_{ij_i}^g=[\frac{\partial g_{ij_i}}{\partial x_1}({x_i}),...,\frac{\partial g_{ij_i}}{\partial x_n}({x_i})]^\top \in \mathbb{R}^n, \textrm{ } i \in {\cal V}$ and $j_i \in {\cal G}_i$. For the sake of convenience, we define the sets ${\cal T}^E=\{ij_i|i \in {\cal V}, j_i \in {\cal G}_i, \lambda_{ij_i}^*=0\}$ and ${\cal T}^I=\{ij_i|i \in {\cal V}, j_i \in {\cal G}_i, \lambda_{ij_i}^* \neq 0\}$. Let $\mathbf{\Psi}=[\boldsymbol{\psi}_1^h,...,\boldsymbol{\psi}_l^h,\underbrace{...,\boldsymbol{\psi}_{ij_i}^g,...}_{ij_i \in {\cal T}^I}]$.

\begin{assumption} \label{assumption3} The matrix $\boldsymbol{\Psi}$ has a full column rank. 
\end{assumption}

As shown in later that \textit{Assumption~\ref{assumption3}} is required  to guarantee the uniqueness of saddle point of $\mathfrak{L}$ in (\ref{Lagrangian}).
\begin{assumption} \label{assumption4} 
The communication graph $\mathcal{G}$ is undirected and connected.
\end{assumption}
\section{Main Results}
\subsection{Distributed Solution}
To tackle the problem (\ref{GeneralProblem}) in a fully distributed and privacy-guaranteed manner, we propose a solution described in (\ref{eq:boundaryLayerX})-(\ref{eq:reducedSystem}) which operate simultaneously. We would like to estimate the average $\frac{1}{n}\sum_{i=1}^n(a^h_{ie}x_i+b_{ie}^h)$, for each $i\in {\cal V}$ and $e\in {\cal H}$, by the following dynamical equations
\begin{subequations}\label{eq:boundaryLayerX}
    \begin{align}
        \dot{\xi}_{ie}^h=&-\xi_{ie}^h-\sum_{j \in {\cal N}_i}(\xi_{ie}^h-\xi_{je}^h)-\sum_{j \in {\cal N}_i}(\zeta_{ie}^h-\zeta_{je}^h) \nonumber \\
        &  +(a^h_{ie}x_i+b_{ie}^h), \\
        \dot{\zeta}_{ie}^h=&\sum_{j \in {\cal N}_i}(\xi_{ie}^h-\xi_{je}^h),
    \end{align}
\end{subequations}
where $\xi_{ie}^h$ and $\zeta_{ie}^h \in \mathbb{R}$. We also estimate the Lagrange multiplier $\mu_e$, for each $i\in {\cal V}$ and $e\in {\cal H}$, by
\begin{subequations} \label{eq:boundaryLayerMu}
    \begin{align}
        \dot{\xi}_{ie}^\mu=&-\xi_{ie}^\mu-\sum_{j \in {\cal N}_i}(\xi_{ie}^\mu-\xi_{je}^\mu) \nonumber \\
        &-\sum_{j \in {\cal N}_i}(\zeta_{ie}^\mu-\zeta_{je}^\mu)+\mu_{ie}, \\
        \dot{\zeta}_{ie}^\mu=&\sum_{j \in {\cal N}_i}(\xi_{ie}^\mu-\xi_{je}^\mu),
    \end{align}
\end{subequations}
where $\xi_{ie}^\mu, \zeta_{ie}^\mu$, and $\mu_{ie} \in \mathbb{R}$.  
The following is called slow dynamics aiming to seek the optimal value for $x_i$, $i \in {\cal V}$:
\begin{subequations} \label{eq:reducedSystem}
    \begin{align}
        \dot{x}_{i}=&-\epsilon k_i^x\Big( \frac{\partial f_i}{\partial x_i}(x_i)+\sum_{e=1}^l \xi_{ie}^\mu \frac{\partial h_e}{\partial x_i}(\mathbf{x}) \nonumber \\
        &\qquad \textrm{ } \textrm{ }+\sum_{j_i=1}^{m_i}\lambda_{ij_i}\frac{\partial g_{ij_i}}{\partial x_i}(x_i)\Big), \label{eq:reducedSystem1}\\
        \dot{\mu}_{ie}=&\epsilon k_{ie}^\mu \Big( \xi_{ie}^h-\sum_{j \in {\cal N}_i}(\mu_{ie}-\mu_{je})\Big), \textrm{ } e \in {\cal H}, \label{eq:reducedSystem2}\\
        \dot{\lambda}_{ij_i}=&\epsilon k_{ij_i}^\lambda \lambda_{ij_i} g_{ij_i}(x_i), \textrm{ }j_i \in {\cal G}_i, \label{eq:reducedSystem3}
  \end{align}
\end{subequations}
where $k_i^x,k_{ie}^{\mu}$, $k_{ie}^{\lambda}\in \mathbb{R}_{>0}$ and $\epsilon$ is a small real positive number.\\
\begin{remark} \label{remark1}
The equations (\ref{eq:boundaryLayerX}) and (\ref{eq:boundaryLayerMu}), which utilize the average dynamic consensus algorithm mentioned in Section II, are considered as fast dynamics, while (\ref{eq:reducedSystem}) is slow dynamics with sufficiently small $\epsilon \in \mathbb{R}_{>0}$. As can be seen in (\ref{eq:boundaryLayerX})-(\ref{eq:reducedSystem}), agent $i$ needs only its local information and its neighbors' information; so the proposed algorithm is fully distributed. Furthermore, the strategy also guarantees the privacy; each agent knows only the average estimate during the information exchange. Since $\mu_e$ is global information, the subsystem (\ref{eq:reducedSystem2}) aims to reach a consensus value $\mu_{ie}=\mu_{je}$, for all $i,j \in {\cal V}$. Additionally, it is stated in \cite{Durr2011}-\cite{Durr2012} that if $\lambda_{ij_i}$ has positive initialization, then it stays non-negative.
\end{remark}
\subsection{Convergence Analysis}
\begin{lemma}\label{lemma3}
Let $\mathfrak{L}(\mathbf{x},\boldsymbol{\mu}, \boldsymbol{\lambda})$ be strictly convex in $\mathbf{x}$ and suppose it possesses at least one saddle point $(\mathbf{x}^*,\boldsymbol{\mu}^*, \boldsymbol{\lambda}^*)$. Then the component $\mathbf{x}^*$ of every saddle point $(\mathbf{x}^*,\boldsymbol{\mu}^*, \boldsymbol{\lambda}^*)$ is unique.
\end{lemma}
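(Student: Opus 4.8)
The plan is to exploit the well-known interchangeability structure of saddle points together with the strict convexity hypothesis. Writing $\mathbf{y}=(\boldsymbol{\mu},\boldsymbol{\lambda})$ for brevity, I would begin by taking two (possibly distinct) saddle points $(\mathbf{x}_1^*,\mathbf{y}_1^*)$ and $(\mathbf{x}_2^*,\mathbf{y}_2^*)$ of $\mathfrak{L}$, each satisfying the defining double inequality $\mathfrak{L}(\mathbf{x}_k^*,\mathbf{y})\le \mathfrak{L}(\mathbf{x}_k^*,\mathbf{y}_k^*)\le \mathfrak{L}(\mathbf{x},\mathbf{y}_k^*)$ for all admissible $\mathbf{x}$ and $\mathbf{y}$, and $k\in\{1,2\}$. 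The goal is to show $\mathbf{x}_1^*=\mathbf{x}_2^*$.

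The key step is to establish that $\mathbf{x}_2^*$ is also a minimizer of the map $\mathbf{x}\mapsto\mathfrak{L}(\mathbf{x},\mathbf{y}_1^*)$. To this end I would chain the saddle inequalities: the right half of the first saddle condition (with $\mathbf{x}=\mathbf{x}_2^*$) gives $\mathfrak{L}(\mathbf{x}_1^*,\mathbf{y}_1^*)\le \mathfrak{L}(\mathbf{x}_2^*,\mathbf{y}_1^*)$, while the left half of the second saddle condition (with $\mathbf{y}=\mathbf{y}_1^*$) gives $\mathfrak{L}(\mathbf{x}_2^*,\mathbf{y}_1^*)\le \mathfrak{L}(\mathbf{x}_2^*,\mathbf{y}_2^*)$. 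Running the symmetric chain with the roles of the two points swapped yields $\mathfrak{L}(\mathbf{x}_2^*,\mathbf{y}_2^*)\le \mathfrak{L}(\mathbf{x}_1^*,\mathbf{y}_2^*)\le \mathfrak{L}(\mathbf{x}_1^*,\mathbf{y}_1^*)$. Concatenating the two chains forces $\mathfrak{L}(\mathbf{x}_1^*,\mathbf{y}_1^*)\le\mathfrak{L}(\mathbf{x}_2^*,\mathbf{y}_2^*)\le\mathfrak{L}(\mathbf{x}_1^*,\mathbf{y}_1^*)$, so every inequality is in fact an equality; in particular $\mathfrak{L}(\mathbf{x}_2^*,\mathbf{y}_1^*)=\mathfrak{L}(\mathbf{x}_1^*,\mathbf{y}_1^*)$.

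I would then conclude by invoking strict convexity. The right half of the first saddle condition states precisely that $\mathbf{x}_1^*$ minimizes $\mathbf{x}\mapsto\mathfrak{L}(\mathbf{x},\mathbf{y}_1^*)$ over $\mathbb{R}^n$, and the equality just obtained shows $\mathbf{x}_2^*$ attains the same minimal value, hence is also a minimizer of this map. Since $\mathfrak{L}(\cdot,\mathbf{y}_1^*)$ is strictly convex by hypothesis, it admits at most one minimizer: if two distinct minimizers existed, evaluating at their midpoint and applying the strict convexity inequality would yield a strictly smaller value, contradicting minimality. Therefore $\mathbf{x}_1^*=\mathbf{x}_2^*$, which is exactly the asserted uniqueness of the primal component $\mathbf{x}^*$.

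The argument is essentially routine once the value-equality chain is in place; I expect the only point requiring care to be the bookkeeping of which half of which saddle inequality is applied with which substitution, so that the two chains close into a loop of equalities. No further machinery (the KKT conditions of Theorem~\ref{theorem2} or \emph{Assumption~\ref{assumption3}}) appears necessary for this particular statement, as the uniqueness of the minimizer of a strictly convex function carries the whole conclusion.
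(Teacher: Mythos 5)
Your proof is correct: the four saddle inequalities chained into a closed loop of equalities, followed by the midpoint argument showing a strictly convex function has at most one minimizer, is a complete and sound argument, and you correctly note that neither \emph{Assumption~\ref{assumption3}} nor the KKT conditions are needed here. The paper itself omits the proof of this lemma, deferring to \emph{Lemma~1} of \cite{Durr2012}, which uses exactly this standard interchangeability-plus-strict-convexity reasoning, so your approach coincides with the intended one.
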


The proof of \textit{Lemma~\ref{lemma3}} is similar to that of \textit{Lemma~1} in \cite{Durr2012}; thus, it is omitted.
\begin{lemma}\label{lemma4} Let \textit{Assumptions~\ref{assumption1}-\ref{assumption2}} be satisfied. Then, there is a unique solution for the optimization problem (\ref{GeneralProblem}).
\end{lemma}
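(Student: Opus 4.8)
The plan is to separate the statement into its two genuine contents---existence of a minimizer and its uniqueness---and dispatch each using the convex structure already in place. Two structural observations come first. Since each $f_i$ is strictly convex in its scalar argument (Assumption~\ref{assumption1}) and the variables are separated, the objective $f(\mathbf{x})=\sum_{i=1}^n f_i(x_i)$ is strictly convex on $\mathbb{R}^n$: for $\mathbf{x}\neq\mathbf{y}$ and $\theta\in(0,1)$, at least one coordinate differs and contributes a strict inequality while the remaining coordinates contribute equalities, so the sum gives $f(\theta\mathbf{x}+(1-\theta)\mathbf{y})<\theta f(\mathbf{x})+(1-\theta)f(\mathbf{y})$. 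Moreover, because every constraint $h_e(\mathbf{x})=0$ and $g_{ij_i}(x_i)\le 0$ is affine, the feasible set $\mathcal{S}$ is an intersection of hyperplanes and closed half-spaces, hence a closed convex (polyhedral) set, and it is nonempty by Assumption~\ref{assumption2}.

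For uniqueness I would argue by contradiction. Suppose $\mathbf{x}^{(1)},\mathbf{x}^{(2)}\in\mathcal{S}^*$ are two distinct solutions, so that $f(\mathbf{x}^{(1)})=f(\mathbf{x}^{(2)})=f^*$, the optimal value. Convexity of $\mathcal{S}$ places the midpoint $\tfrac{1}{2}(\mathbf{x}^{(1)}+\mathbf{x}^{(2)})$ in $\mathcal{S}$, while strict convexity of $f$ yields $f(\tfrac{1}{2}(\mathbf{x}^{(1)}+\mathbf{x}^{(2)}))<\tfrac{1}{2}f(\mathbf{x}^{(1)})+\tfrac{1}{2}f(\mathbf{x}^{(2)})=f^*$, contradicting the optimality of $f^*$. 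Hence $\mathcal{S}^*$ contains at most one point, which settles uniqueness regardless of the existence question.

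For existence I would fix any feasible $\bar{\mathbf{x}}\in\mathcal{S}$ (available by Assumption~\ref{assumption2}) and restrict attention to the sublevel set $\mathcal{S}_0=\{\mathbf{x}\in\mathcal{S}:f(\mathbf{x})\le f(\bar{\mathbf{x}})\}$, which is closed because $f$ is continuous and $\mathcal{S}$ is closed, and which must contain any minimizer of $f$ over $\mathcal{S}$. If $\mathcal{S}_0$ is bounded it is compact, and the Weierstrass extreme value theorem supplies a minimizer; combined with the uniqueness argument above, this completes the proof.

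The main obstacle is precisely the boundedness of $\mathcal{S}_0$. Strict convexity of $f$ alone does not imply coercivity---$e^x$ is strictly convex yet attains no minimum on $\mathbb{R}$---and the affine inequalities only carve out one-sided half-lines in each coordinate, so $\mathcal{S}$ may be unbounded. To close this gap rigorously I would verify that $f$ has no recession direction lying in the recession cone of the polyhedron $\mathcal{S}$ along which $f$ fails to increase, so that $\mathcal{S}_0$ contains no ray and is therefore compact; in the EDP/ORAP settings targeted by the paper the capacity-type inequalities make $\mathcal{S}$ bounded outright and this step is immediate. A cleaner alternative, which I would recommend, is to strengthen Assumption~\ref{assumption2} with coercivity of $f$ on $\mathcal{S}$ (or compactness of $\mathcal{S}$), after which existence follows from Weierstrass in one line and uniqueness is exactly as above.
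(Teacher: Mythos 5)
Your route is genuinely different from the paper's. For uniqueness, the paper goes through the Lagrangian machinery: it invokes Theorem~\ref{theorem1} (convexity plus Slater's condition attaches a saddle point to every minimizer) and then Lemma~\ref{lemma3} (strict convexity of $\mathfrak{L}$ in $\mathbf{x}$ forces the $\mathbf{x}$-component of all saddle points to coincide). Your midpoint argument---two distinct minimizers would place their average in the convex feasible set $\mathcal{S}$ with strictly smaller cost---reaches the same conclusion directly, with no saddle points, no dual variables, and in fact no use of Slater's condition at all; it shows the uniqueness half needs only Assumption~\ref{assumption1} and convexity of $\mathcal{S}$. That half of your proof is complete, correct, and more elementary than the paper's.

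On existence, the obstacle you flag is genuine, and it is worth stating plainly that the paper does not resolve it either: its proof simply asserts that ``$\mathcal{S}^*$ is nonempty, compact, and convex'' with a citation, but that conclusion does not follow from Assumptions~\ref{assumption1}--\ref{assumption2} alone. Your own diagnosis supplies the counterexample class: take $n=1$, $f_1(x_1)=e^{x_1}$ (strictly convex, continuously differentiable), no equality constraints, and the single affine inequality $g_{11}(x_1)=x_1\le 0$. The feasible set is nonempty and Slater's condition holds at $x_1=-1$, yet the infimum $0$ is not attained, so $\mathcal{S}^*=\emptyset$ and the lemma as stated fails. Hence existence requires an additional hypothesis---coercivity of $f$ on $\mathcal{S}$, boundedness of $\mathcal{S}$, or the recession-cone condition you sketch (no direction in the recession cone of the polyhedron $\mathcal{S}$ along which $f$ is nonincreasing)---exactly as you recommend. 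In short, your proposal is not a weaker proof of the paper's lemma; it is a correct proof of the uniqueness half together with an accurate identification of a gap that the paper's own existence argument glosses over, and which disappears in the intended EDP/ORAP applications where capacity bounds make $\mathcal{S}$ compact.
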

\begin{proof}
Since $f_i(x_i)$ is strictly convex, so is $f(\mathbf{x})$. Moreover, $g_{ij_i}(x_i)$  is convex. Thus, ${\cal S}^*$ is nonempty, compact, and convex \cite{Nedic2008}. Under \textit{Assumption~\ref{assumption2}}, Slater's condition is satisfied. Hence, according to \textit{Theorem~\ref{theorem1}}, for each $\mathbf{x}^*\in {\cal S}^*$ there exist  $\boldsymbol{\mu}^*\in \mathbb{R}^{l}$ and $\boldsymbol{\lambda}^*\in \mathbb{R}^{\sum_{i=1}^n m_i}_{\geq 0}$ such that $(\mathbf{x}^*,\boldsymbol{\mu}^*,\boldsymbol{\lambda}^*)$ is a saddle point for $\mathfrak{L}$. Since $f(\mathbf{x})$ is strictly convex, so is $\mathfrak{L}$; thus, the component $\mathbf{x}^*$ of every saddle point $(\mathbf{x}^*,\boldsymbol{\mu}^*,\boldsymbol{\lambda}^*)$ is unique, according to \textit{Lemma~\ref{lemma3}}. This concludes our proof.
\end{proof}

\begin{lemma}\label{lemma5} Let \textit{Assumptions~\ref{assumption1}-\ref{assumption3}} be satisfied. Then, there exists a unique saddle of $\mathfrak{L}$ defined in ({\ref{Lagrangian}}).
\end{lemma}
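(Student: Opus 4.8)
The plan is to split the claim into existence and uniqueness. Existence comes essentially for free: by \textit{Lemma~\ref{lemma4}} the problem (\ref{GeneralProblem}) admits a (unique) solution $\mathbf{x}^*$, and since \textit{Assumption~\ref{assumption2}} guarantees Slater's condition, the converse direction of \textit{Theorem~\ref{theorem1}} supplies multipliers $\boldsymbol{\mu}^*\in\mathbb{R}^l$ and $\boldsymbol{\lambda}^*\in\mathbb{R}^{\sum_i m_i}_{\geq 0}$ for which $(\mathbf{x}^*,\boldsymbol{\mu}^*,\boldsymbol{\lambda}^*)$ is a saddle point of $\mathfrak{L}$. Thus at least one saddle point exists, and by \textit{Lemma~\ref{lemma4}} (via \textit{Lemma~\ref{lemma3}}) every saddle point shares the same first component $\mathbf{x}^*$. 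All that remains is to show that the multiplier pair $(\boldsymbol{\mu}^*,\boldsymbol{\lambda}^*)$ attached to this common $\mathbf{x}^*$ is unique.

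For the uniqueness of the multipliers I would work entirely through the KKT characterization of \textit{Theorem~\ref{theorem2}}, exploiting the affine form of the constraints. Because $h_e(\mathbf{x})=\sum_i (a^h_{ie}x_i+b^h_{ie})$ and $g_{ij_i}(x_i)=a^g_{ij_i}x_i+b^g_{ij_i}$, the constraint gradients are constant: $\boldsymbol{\psi}_e^h=\mathbf{a}_e^h$ and $\boldsymbol{\psi}^g_{ij_i}=a^g_{ij_i}\mathbf{e}_i$, where $\mathbf{e}_i$ is the $i$-th standard basis vector. Next I would invoke complementary slackness (\ref{eq:KKT1_4}): since the active set $\{ij_i : a^g_{ij_i}x_i^*+b^g_{ij_i}=0\}$ is fixed once $\mathbf{x}^*$ is fixed, every multiplier associated with an inactive inequality must vanish, so any admissible $\boldsymbol{\lambda}^*$ is supported on the active indices $\mathcal{T}^I$. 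Substituting this into the stationarity condition (\ref{eq:KKT1_1}) collapses it to the linear system $\boldsymbol{\Psi}\,\mathbf{v}^*=-\nabla f(\mathbf{x}^*)$, where $\mathbf{v}^*=[\boldsymbol{\mu}^{*\top},(\boldsymbol{\lambda}^*_{\mathcal{T}^I})^\top]^\top$ stacks the equality multipliers together with the multipliers of the active inequalities, and $\boldsymbol{\Psi}$ is exactly the matrix of \textit{Assumption~\ref{assumption3}}.

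Uniqueness then follows by a linear-algebra argument: since $\mathbf{x}^*$ is unique, the right-hand side $-\nabla f(\mathbf{x}^*)$ is a fixed vector, and because $\boldsymbol{\Psi}$ has full column rank (\textit{Assumption~\ref{assumption3}}) the map $\mathbf{v}\mapsto\boldsymbol{\Psi}\mathbf{v}$ is injective; hence $\mathbf{v}^*$ --- and therefore $\boldsymbol{\mu}^*$ together with the active part of $\boldsymbol{\lambda}^*$ --- is uniquely determined, while the remaining components of $\boldsymbol{\lambda}^*$ are pinned to zero by complementary slackness. Combined with the uniqueness of $\mathbf{x}^*$ this yields a single saddle point. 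The step I expect to be the main obstacle is the bookkeeping around the active set: I must verify that the index set on which the multipliers are supported depends only on the unique $\mathbf{x}^*$ and not on the particular saddle point, so that two saddle points cannot differ by shifting weight onto constraints that are active yet carry zero multiplier. The cleanest route is to apply the full-rank hypothesis to the gradients of all active inequalities, since complementary slackness confines every saddle point's multipliers to this common active support, after which injectivity of $\boldsymbol{\Psi}$ forces them to coincide.
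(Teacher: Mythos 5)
Your proposal follows essentially the same route as the paper's proof: existence of a saddle point from \textit{Lemma~\ref{lemma4}} (via Slater's condition and \textit{Theorem~\ref{theorem1}}), uniqueness of the $\mathbf{x}^*$-component from strict convexity, and then uniqueness of the multipliers by rewriting the stationarity condition (\ref{eq:KKT1_1}) as the linear system $\boldsymbol{\Psi}\,\mathbf{v}^*=-\nabla f(\mathbf{x}^*)$ and invoking the full column rank of $\boldsymbol{\Psi}$ from \textit{Assumption~\ref{assumption3}}. If anything, your bookkeeping is more careful than the paper's: the paper supports the multipliers on ${\cal T}^I$ (indices with \emph{nonzero} multiplier, which a priori depends on the particular saddle point) and silently assumes all saddle points share this support, whereas your suggestion to apply the full-rank hypothesis to the gradients of \emph{all active} inequalities---a set determined by the unique $\mathbf{x}^*$ alone---closes that gap, at the mild cost of reading \textit{Assumption~\ref{assumption3}} as holding for the active-constraint matrix rather than the literal ${\cal T}^I$-indexed one.
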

\begin{proof} According to  \textit{Lemma~\ref{lemma4}}, there exists a unique optimal solution $\mathbf{x}^*$ of (\ref{GeneralProblem}) and there also exists a saddle point $(\mathbf{x}^*,\boldsymbol{\mu}^*,\boldsymbol{\lambda}^*)$ for $\mathfrak{L}$. We then prove this saddle point is unique. 
The KKT condition (\ref{eq:KKT1_1}) can be rewritten as
\begin{equation*}
\mathbf{\Psi}[(\boldsymbol{\mu}^*)^\top \quad (\boldsymbol{\lambda}^{*,I})^\top ]^\top=-\nabla f(\mathbf{x}^*),  
\end{equation*}
where $\boldsymbol{\lambda}^{*,I}=[...,\lambda_{ij_i}^*,...]^\top$ for $ij_i \in {\cal T}^I$. 
 Furthermore, since $f_i$ is strictly convex, $\frac{\partial f_i}{\partial x_i}(\mathit{x}_i)$ is an increasing function; thus, there is a unique $\nabla f(\mathbf{x}^*)$ for the unique $\mathbf{x}^*$.
As a result, under \textit{Assumption~\ref{assumption3}} there exists unique $\boldsymbol{\mu}^*$ and $\boldsymbol{\lambda}^*$ corresponding to  $\mathbf{x}^*$; hence, the saddle point $(\mathbf{x}^*,\boldsymbol{\mu}^*,\boldsymbol{\lambda}^*)$ is unique. This completes our proof.\end{proof}

We now go further to investigate the convergence of our proposed strategy to the unique solution of (\ref{GeneralProblem}). Let $\epsilon$ tends to 0 and \textit{Assumptions~\ref{assumption4}} be satisfied, then $\xi_{ie}^\mu$ and  $\xi_{ie}^h$ become instantaneous \cite{Tan2006} as discussed in Section II; that is $\xi_{ie}^h$ in (\ref{eq:boundaryLayerX}) converges to $\frac{1}{n}\sum_{i=1}^n(a_{ie}^hx_i+b_{ie}^h)$ and $\xi_{ie}^\mu$ in (\ref{eq:boundaryLayerMu}) converges to $\frac{1}{n}\sum_{i=1}^n\mu_{ie}$. Hence,  (\ref{eq:reducedSystem}) can take the following form in time $\tau=\epsilon t$ scale when $\epsilon$ tends to 0:
\begin{subequations}\label{eq:ReducedModelS}
    \begin{align}
       \frac{d x_i }{d \tau }=&- k_i^x\Big( \frac{\partial f_i}{\partial x_i}(x_i)+\sum_{e=1}^l \overline{\mu}_{e} a_{ie}^h +\sum_{j_i=1}^{m_i}\lambda_{ij_i}a^g_{ij_i}\Big), \label{eq:ReducedModelS1}\\
       \frac{d \mu_{ie} }{d \tau } =& k_{ie}^\mu \Big( \overline{h}_e-\sum_{j \in {\cal N}_i}(\mu_{ie}-\mu_{je})\Big), \textrm{ } e \in {\cal H},\label{eq:ReducedModelS2}\\
        \frac{d \lambda_{ij_i} }{d \tau }=& k_{ij_i}^\lambda \lambda_{ij_i} g_{ij_i}(x_i), \textrm{ }j_i \in {\cal G}_i.\label{eq:ReducedModelS3}
    \end{align}
\end{subequations}
where $\overline{\mu}_e=\frac{1}{n}\sum_{i=1}^n\mu_{ie}$ and $\overline{h}_e=\frac{1}{n}\sum_{i=1}^n(a_{ie}^hx_i+b_{ie}^h)$.
\begin{lemma}\label{lemma6} Let \textit{Assumptions~\ref{assumption1}-\ref{assumption4}} be satisfied. Additionally, $\lambda_{ij_i}$ has positive initialization  for all $i \in {\cal V}$ and $j_i \in {\cal G}_i$. The reduced model (\ref{eq:ReducedModelS}) is globally asymptotically stable.
\end{lemma}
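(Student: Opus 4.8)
The plan is to treat the reduced flow \eqref{eq:ReducedModelS} as a primal--dual (saddle-point) dynamics and establish convergence to the unique saddle point of $\mathfrak{L}$ via a Lyapunov/LaSalle argument. First I would identify the candidate equilibrium. Using the KKT characterization of \textit{Theorem~\ref{theorem2}} and the uniqueness from \textit{Lemma~\ref{lemma5}}, I claim that $(\mathbf{x}^*,\{\mu_e^*\}_{i,e},\boldsymbol{\lambda}^*)$ --- with every agent copy $\mu_{ie}$ set to the common optimal value $\mu_e^*$ --- is an equilibrium: \eqref{eq:KKT1_1} annihilates the right-hand side of \eqref{eq:ReducedModelS1}, the relation $h_e(\mathbf{x}^*)=0$ in \eqref{eq:KKT1_2} gives $\overline{h}_e=0$ so that \eqref{eq:ReducedModelS2} vanishes on consensus, and complementary slackness \eqref{eq:KKT1_4} makes \eqref{eq:ReducedModelS3} vanish. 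By \textit{Remark~\ref{remark1}}, positive initialization keeps each $\lambda_{ij_i}$ strictly positive for all $\tau$, so the trajectory remains in the open domain where the forthcoming energy is smooth.

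Next I would introduce the Lyapunov function
\[
V=\sum_{i}\frac{(x_i-x_i^*)^2}{2k_i^x}+\sum_{i,e}\frac{(\mu_{ie}-\mu_e^*)^2}{2k_{ie}^\mu}+\sum_{i,j_i}\frac{1}{k_{ij_i}^\lambda}\Big(\lambda_{ij_i}-\lambda_{ij_i}^*-\lambda_{ij_i}^*\ln\tfrac{\lambda_{ij_i}}{\lambda_{ij_i}^*}\Big),
\]
a quadratic form in the $\mathbf{x}$- and $\boldsymbol{\mu}$-deviations together with a Bregman divergence of the negative entropy tailored to the multiplicative $\boldsymbol{\lambda}$-dynamics (the last term is read as $\lambda_{ij_i}/k_{ij_i}^\lambda$ when $\lambda_{ij_i}^*=0$). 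This $V$ is positive definite with respect to the equilibrium and radially unbounded on $\{\boldsymbol{\lambda}>\mathbf{0}\}$ --- the entropy term diverges as $\lambda_{ij_i}\to0^+$ and as $\lambda_{ij_i}\to\infty$ --- which I will use both to confine trajectories to a compact sublevel set and to supply the properness needed for LaSalle.

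Differentiating $V$ along \eqref{eq:ReducedModelS}, the gain weights are chosen precisely so that the cross terms telescope. I expect three cancellations: (i) the $\mathbf{x}$-block yields $-(x_i-x_i^*)\big(\tfrac{\partial f_i}{\partial x_i}(x_i)-\tfrac{\partial f_i}{\partial x_i}(x_i^*)\big)$ after eliminating $\tfrac{\partial f_i}{\partial x_i}(x_i^*)$ via \eqref{eq:KKT1_1}; (ii) the $a_{ij_i}^g(x_i-x_i^*)$ coupling between the $\mathbf{x}$- and $\boldsymbol{\lambda}$-blocks cancels; and (iii) the $\overline{h}_e(\overline{\mu}_e-\mu_e^*)$ coupling between the $\mathbf{x}$- and $\boldsymbol{\mu}$-blocks cancels against the consensus-averaging term of \eqref{eq:ReducedModelS2} after summing over $i$. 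What remains is
\[
\dot V=-\sum_i (x_i-x_i^*)\Big(\tfrac{\partial f_i}{\partial x_i}(x_i)-\tfrac{\partial f_i}{\partial x_i}(x_i^*)\Big)+\sum_{i,j_i}\lambda_{ij_i}g_{ij_i}(x_i^*)-\sum_e \mathbf{m}_e^\top \mathbf{L}\,\mathbf{m}_e,
\]
where $\mathbf{m}_e$ collects the deviations $\mu_{ie}-\mu_e^*$. Each term is nonpositive: the first by strict monotonicity of $\tfrac{\partial f_i}{\partial x_i}$ (\textit{Assumption~\ref{assumption1}}), the second because $\lambda_{ij_i}\ge0$ while $g_{ij_i}(x_i^*)\le0$ by \eqref{eq:KKT1_3} and $\lambda_{ij_i}^*g_{ij_i}(x_i^*)=0$ by \eqref{eq:KKT1_4}, and the third because $\mathbf{L}\succeq0$ for the connected undirected graph (\textit{Assumption~\ref{assumption4}}). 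Hence $\dot V\le0$ globally.

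Finally I would invoke LaSalle's invariance principle and study the largest invariant set inside $\{\dot V=0\}$. There the three terms vanish separately, forcing $\mathbf{x}=\mathbf{x}^*$, $\boldsymbol{\mu}$ in consensus ($\mu_{ie}=c_e$ for each $e$, since $\ker\mathbf{L}=\mathrm{span}\{\mathbf{1}\}$), and $\lambda_{ij_i}g_{ij_i}(x_i^*)=0$ so that the strictly inactive multipliers are pinned to $0$. Imposing invariance ($\dot x_i=0$ on the set) and eliminating $\tfrac{\partial f_i}{\partial x_i}(x_i^*)$ through \eqref{eq:KKT1_1} produces the stationarity relation $\mathbf{\Psi}\,\big[(\mathbf{c}-\boldsymbol{\mu}^*)^\top\ \ (\boldsymbol{\lambda}^{I}-\boldsymbol{\lambda}^{*,I})^\top\big]^\top=\mathbf{0}$, with $\boldsymbol{\lambda}^{I}$ the active-index values on the set; \textit{Assumption~\ref{assumption3}} (full column rank of $\mathbf{\Psi}$) then forces $\mathbf{c}=\boldsymbol{\mu}^*$ and $\boldsymbol{\lambda}^{I}=\boldsymbol{\lambda}^{*,I}$, collapsing the invariant set to the unique equilibrium and yielding global asymptotic stability. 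I expect the main obstacle to be exactly this last collapse: reducing the stationarity relation cleanly onto the columns of $\mathbf{\Psi}$ --- that is, confirming that the indices in ${\cal T}^E$ contribute nothing (straightforward for strictly inactive constraints, but requiring care, or an explicit strict-complementarity stipulation, for any weakly active ones whose gradients are absent from $\mathbf{\Psi}$) --- after which \textit{Assumption~\ref{assumption3}} applies. The cross-term bookkeeping in $\dot V$ is tedious but routine by comparison.
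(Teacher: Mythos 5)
Your proposal is correct and takes essentially the same route as the paper: the identical Bregman/entropy Lyapunov function (the paper writes it in shifted coordinates $\hat{\lambda}_{ij_i}=\lambda_{ij_i}-\lambda_{ij_i}^*$), the same three nonpositive terms in $\frac{dV}{d\tau}$, and the same LaSalle invariance argument. The only cosmetic difference is that you re-derive the collapse of the invariant set from \textit{Assumption~\ref{assumption3}} inline, whereas the paper invokes \textit{Lemma~\ref{lemma5}} (uniqueness of the saddle point), whose proof is precisely that full-column-rank argument---including the weakly-active-constraint subtlety you flag, which the paper's \textit{Lemma~\ref{lemma5}} inherits as well.
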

\begin{proof}
We introduce the coordinate transformation $\hat{x}_i=x_i - x_i^*$, $\hat{\mu}_{ie}=\mu_{ie} - \mu_{e}^*$ and $\hat{\lambda}_{ij_i}=\lambda_{ij_i} - \lambda_{ij_i}^*$.
Let $\overline{\hat{\mu}}_e=\frac{1}{n}\sum_{i=1}^n \hat{\mu}_{ie}$ and $\overline{\hat{h}}_e=\frac{1}{n}\sum_{i=1}^n a_{ie}^h \hat{x}_i$. It is worth mentioning that $\frac{1}{n}\sum_{i=1}^n(a_{ie}^hx_i^*+b_{ie}^h)=0$ and we also have (\ref{eq:KKT1_1}). Then, (\ref{eq:ReducedModelS}) can be rewritten as
\begin{subequations}\label{eq:transformReducedModelS}
    \begin{align}
       \frac{d \hat{x}_i }{d \tau }=&- k_i^x\Big( \frac{\partial f_i}{\partial x_i}(\hat{x}_i+x_i^*)-\frac{\partial f_i}{\partial x_i}(x_i^*) \nonumber \\
       &\qquad \textrm{ }+\sum_{e=1}^l \overline{\hat{\mu}}_{e} a_{ie}^h +\sum_{j_i=1}^{m_i}\hat{\lambda}_{ij_i}a^g_{ij_i}\Big),\\
       \frac{d \hat{\mu}_{ie} }{d \tau } =& k_{ie}^\mu \Big( \overline{\hat{h}}_e-\sum_{j \in {\cal N}_i}(\hat{\mu}_{ie}-\hat{\mu}_{je})\Big), \textrm{ } e \in {\cal H},\\        
        \frac{d \hat{\lambda}_{ij_i} }{d \tau }=& k_{ij_i}^\lambda (\hat{\lambda}_{ij_i}+\lambda_{ij_i}^*)(a_{ij_i}^g \hat{x}_i+b_{ij_i}^g+a_{ij_i}^g x_i^*),
         \textrm{ }j_i \in {\cal G}_i.
    \end{align}
\end{subequations}
Let us denote $\hat{\mathbf{x}}=[\hat{x}_1,...,\hat{x}_n]^\top\in \mathbb{R}^n$, $\hat{\boldsymbol{\mu}}_e=[\hat{\mu}_{1e},...,\hat{\mu}_{ne}]^\top \in \mathbb{R}^n $, $e \in {\cal H}$, $\tilde{\boldsymbol{\mu}}=[\hat{\boldsymbol{\mu}}_1,...,\hat{\boldsymbol{\mu}}_l]^\top \in \mathbb{R}^{nl}$ and  $\hat{\boldsymbol{\lambda}}=[\hat{\lambda}_{11},...,\hat{\lambda}_{ij_i},...,\hat{\lambda}_{nm_n}]^\top \in \mathbb{R}^{\sum_{i=1}^n m_i}$. For the sake of presentation, we also denote $\hat{\mathbf{w}}=[\hat{\mathbf{x}}^\top,\tilde{\boldsymbol{\mu}}^\top,\hat{\boldsymbol\lambda}^\top]^\top$. Inspired by the Lyapunov function proposed in \cite{Durr2011}-\cite{Durr2012}, we consider the following Lyapunov function:
\begin{equation*}
\begin{split}
V(\hat{\mathbf{w}})=&\sum_{i=1}^n\frac{1}{2k_i^x}\hat{x}_i^2+\sum_{i=1}^n\sum_{e=1}^l \frac{1}{2k_{ie}^\mu} \hat{\mu}_{ie}^2
+\sum_{i=1}^n \sum_{j_i=1,\lambda_{ij_i}^*= 0}^{m_i}\frac{\hat{\lambda}_{ij_i}}{k_{ij_i}^\lambda} \\
&+\sum_{i=1}^n \sum_{j_i=1,\lambda_{ij_i}^*\neq  0}^{m_i} \frac{1}{k_{ij_i}^\lambda} \Big( \hat{\lambda}_{ij_i}-\lambda_{ij_i}^*\textrm{ln}(\frac{\hat{\lambda}_{ij_i}+\lambda_{ij_i}^*}{\lambda_{ij_i}^*})\Big).
\end{split}
\end{equation*}
It is straightforward to see that $\sum_{i=1}^n\frac{1}{2k_i^x}\hat{x}_i^2\geq 0$ and $\sum_{i=1}^n\sum_{e=1}^l \frac{1}{2k_{ie}^\mu} \hat{\mu}_{ie}^2 \geq 0$.  
Note that for $ij_i \in \mathcal{T}^E$, we have $\hat{\lambda}_{ij_i}=\lambda_{ij_i} \geq 0$. For $ij_i \in \mathcal{T}^I$, let us denote $\theta_{ij_i}=\frac{\hat{\lambda}_{ij_i}}{\lambda_{ij_i}^*}$. Then the sum of  the third and the fourth term of $V(\hat{\mathbf{w}})$ can be rewritten as $\sum_{i=1}^n\sum_{ij_i \in \mathcal{T}^E}\frac{\hat{\lambda}_{ij_i}}{k_{ij_i}^{\lambda}}+\sum_{i=1}^n\sum_{ij_i \in \mathcal{T}^I} \frac{\lambda_{ij_i}^*}{k_{ij_i}^\lambda} (\theta_{ij_i}-\textrm{ln}(1+\theta_{ij_i}) )$. The function $f^{\theta}(\theta_{ij_i})=\theta_{ij_i}-\textrm{ln}(1+\theta_{ij_i})\geq 0$ for all $ \theta_{ij_i} \in (-1,+\infty)$, $f^{\theta}(\theta_{ij_i})\rightarrow +\infty$ when $\theta_{ij_i} \rightarrow +\infty$ and $f^{\theta}(\theta_{ij_i})=0$ $\Leftrightarrow$ $\theta_{ij_i}=0$ (or $\hat{\lambda}_{ij_i}=0$) in the case $ij_i \in \mathcal{T}^I$. Therefore, $V(\hat{\mathbf{w}})$ is continuously differentiable, radially unbounded and positive definite on $\mathbb{R}^n \times \mathbb{R}^{nl} \times \mathbb{R}_{>-\boldsymbol{\lambda}^*}^{\sum_{i=1}^n m_i}$.
Taking the derivatives of $V(\hat{\mathbf{w}})$ along the trajectories of (\ref{eq:transformReducedModelS}) and noticing that $\sum_{i=1}^n\Big(\hat{x}_i (\sum_{e=1}^l\overline{\hat{\mu}}_ea_{ie}^h)\Big)=\sum_{i=1}^n\Big( \sum_{e=1}^l (\hat{\mu}_{ie}\overline{\hat{h}}_e)\Big)$, we obtain
\begin{equation*}
\begin{split}
\frac{dV(\hat{\mathbf{w}})}{d\tau}=&-\sum_{i=1}^n\hat{x}_i\Big(\frac{\partial f_i}{\partial x_i}(\hat{x}_i+x_i^*)-\frac{\partial f_i}{\partial x_i}(x_i^*)\Big) \\
                &-\sum_{i=1}^n \Big( \hat{x}_i\sum_{j_i=1}^{m_i} \hat{\lambda}_{ij_i} a_{ij_i}^g \Big)-\sum_{e=1}^l\hat{\boldsymbol{\mu}}_e^\top \mathbf{L} \hat{\boldsymbol{\mu}}_e\\
                &+ \sum_{i=1}^n \sum_{j_i=1}^{m_i}\Big(\frac{1}{k_{ij_i}^\lambda} \frac{d\hat{\lambda}_{ij_i}}{d \tau} - \frac{\lambda_{ij_i}^*}{k_{ij_i}^\lambda}  \frac{\frac{d\hat{\lambda}_{ij_i}}{d \tau} }{\hat{\lambda}_{ij_i}+\lambda_{ij_i}^*} \Big)
\end{split}
\end{equation*}
It is worth mentioning that $\frac{d\hat{\lambda}_{ij_i}}{d \tau}=\frac{d\lambda_{ij_i}}{d \tau}$,  $\lambda_{ij_i}=\hat{\lambda}_{ij_i}+\lambda_{ij_i}^*$, $x_i=\hat{x}_i+x_i^*$ and $\lambda_{ij_i}^*(a_{ij_i}^gx_i^*+b_{ij_i}^g)=0$. Then, we can have
\begin{equation*}
\begin{split}
&\frac{1}{k_{ij_i}^\lambda} \frac{d \hat{\lambda}_{ij_i}}{d \tau} - \frac{\lambda_{ij_i}^*}{k_{ij_i}^\lambda}  \frac{\frac{d \hat{\lambda}_{ij_i}}{d \tau}}{\hat{\lambda}_{ij_i}+\lambda_{ij_i}^*}
=\frac{1}{k_{ij_i}^\lambda} \frac{d \hat{\lambda}_{ij_i}}{d \tau} - \frac{\lambda_{ij_i}^*}{k_{ij_i}^\lambda}  \frac{\frac{d \hat{\lambda}_{ij_i}}{d \tau}}{\lambda_{ij_i}}\\
=&(\hat{\lambda}_{ij_i}+\lambda_{ij_i}^*)(a_{ij_i}^g \hat{x}_i+b_{ij_i}^g+a_{ij_i}^g x_i^*)\\
& - \lambda_{ij_i}^* (a_{ij_i}^g \hat{x}_i+b_{ij_i}^g+a_{ij_i}^g x_i^*)\\
=& \hat{\lambda}_{ij_i} (a_{ij_i}^g \hat{x}_i+b_{ij_i}^g+a_{ij_i}^g x_i^*).
\end{split}
\end{equation*}
It is also worth noticing that $\sum_{i=1}^n \Big( \hat{x}_i\sum_{j_i=1}^{m_i} \hat{\lambda}_{ij_i} a_{ij_i}^g \Big)=\sum_{i=1}^n \sum_{j_i=1}^{m_i}\hat{\lambda}_{ij_i}a_{ij_i}^g\hat{x}_i$. Hence, we have
\begin{equation*}
\begin{split}
\frac{dV(\hat{\mathbf{w}})}{d\tau}=&-\sum_{i=1}^n\hat{x}_i\Big(\frac{\partial f_i}{\partial x_i}(\hat{x}_i+x_i^*)-\frac{\partial f_i}{\partial x_i}(x_i^*)\Big) \\
                &-\sum_{e=1}^l\boldsymbol{\mu}_e^\top \mathbf{L} \boldsymbol{\mu}_e+\sum_{i=1}^n \sum_{j_i=1}^{m_i} \hat{\lambda}_{ij_i} (a_{ij_i}^g x_i^*+b_{ij_i}^g).\\       
\end{split}
\end{equation*}
Since $\frac{\partial f_i}{\partial x_i}(x_i)$ is an increasing function, $-\hat{x}_i\Big( \frac{\partial f_i}{\partial x_i}({\hat x}_i+x_i^*)-\frac{\partial f_i}{\partial x_i}({x}_i^*)\Big)\leq 0$. In addition, it is straightforward to see that $-\hat{\boldsymbol{\mu}}_e^\top\mathbf{L}\hat{\boldsymbol{\mu}}_e \leq 0$. Furthermore, note that $\hat{\lambda}_{ij_i}=\lambda_{ij_i}\geq 0$ for $ij_i \in {\cal T}^E$ and $a_{ij_i}^g x_i^*+b_{ij_i}^g=0$ for $ij_i \in {\cal T}^I$. Considering the KKT condition (\ref{eq:KKT1_3}), we can obtain $\hat{\lambda}_{ij_i} (a_{ij_i}^g x_i^*+b_{ij_i}^g) \leq 0$. Consequently, $\frac{d V}{d \tau}\leq 0$. We can use LaSalle's invariance principle where the largest invariant set $\cal I$ is defined as $\frac{d V}{d \tau} \equiv 0$. The set defined from $\frac{d V}{d \tau} \equiv 0$ is characterized by\\
\indent 1) $\hat{x}_i=0$ for all $i \in {\cal V}$.\\
\indent 2) $\hat{\mu}_{ie}=\hat{\mu}_{je}$ for all $i,j\in \cal V$, $e \in \{ 1,...,l\}$.\\
\indent 3) ${\hat{\lambda}}_{ij_i} (a_{ij_i}^g x_i^*+b_{ij_i}^g)=0$, $i \in {\cal V}$ and $j_i \in {\cal G}_i$.\\
First, $\hat{x}_i=0$ in 1) implies $\frac{d {x}_i}{d \tau}=0$ and $x_i=x_i^*$. From (\ref{eq:ReducedModelS1}), in the original coordinate, we therefore have 
\begin{equation*}
\nabla f(\mathbf{x}^*)+ \sum_{e=1}^l \overline{\mu}_e \mathbf{a}_e^h+{[\sum_{j_i=1}^{m_i} {\lambda}_{ij_i} {a}_{ij_i}^g]_{vec}} =0,
\end{equation*}
which satisfies the condition (\ref{eq:KKT1_1}). Second, from  3) and the complementary slackness condition ${\lambda}_{ij_i}^* (a_{ij_i}^g x_i^*+b_{ij_i}^g)=0$, we can obtain $\lambda_{ij_i}(a_{ij_i}^g x_i^*+b_{ij_i}^g)=0$ satisfying (\ref{eq:KKT1_4}) condition. It can also be seen that $\lambda_{ij_i}\geq 0$ and since $x_i=x_i^*$, $\sum_{i=1}^n(a_{ie}^hx_i+b_{ie}^h)=0$ and $a_{ij_i}^gx_i+b_{ij_i}^g \leq 0$. Hence, the largest invariant set in the original coordinate contains elements that satisfy the KKT conditions (\ref{eq:KKT1_1})-(\ref{eq:KKT1_4}). Then, the largest invariant set in the original coordinate is characterized by\\
\indent 1') $x_i=x_i^*$ for all $i \in {\cal V}$.\\
\indent 2') ${\mu}_{ie}={\mu}_{je}=\overline{\mu}_e=\mu_e^*$ for all $i,j\in \cal V$, $e \in {\cal H}$.\\
\indent 3') ${{\lambda}}_{ij_i} =\lambda_{ij_i}^*$, $i \in {\cal V}$ and $j_i \in {\cal G}_i$.\\
As proved in \textit{Lemma~\ref{lemma5}}, the saddle point $(\mathbf{x}^*,\boldsymbol{\mu}^*,\boldsymbol{\lambda}^*)$ is unique. We therefore can conclude that the reduced model (\ref{eq:ReducedModelS}) is globally asymptotically stable.
\end{proof}

Suppose $(\xi_{ie}^{*,h},\zeta_{ie}^{*,h})$ and $(\xi_{ie}^{*,\mu},\zeta_{ie}^{*,\mu})$ in order are equilibrium points of boundary-layer (\ref{eq:boundaryLayerX}) and (\ref{eq:boundaryLayerMu}) at the equilibrium point of the reduced model. Introducing the change of variables $\hat{\xi}_{ie}^h \triangleq \xi_{ie}^h-\xi_{ie}^{*,h}$, $\hat{\zeta}_{ie}^h \triangleq \zeta_{ie}^h-\zeta_{ie}^{*,h}$, $\hat{\xi}_{ie}^\mu \triangleq \xi_{ie}^\mu-\xi_{ie}^{*,\mu}$ and $\hat{\zeta}_{ie}^\mu \triangleq \zeta_{ie}^\mu-\zeta_{ie}^{*,\mu}$, (\ref{eq:boundaryLayerX})-(\ref{eq:boundaryLayerMu}) can be rewritten for all $i\in {\cal V}$ and $e\in {\cal H}$ as
\begin{subequations}\label{eq:transBoundaryLayerX}
    \begin{align}
        \epsilon \frac{d \hat{\xi}_{ie}^h}{d \tau}=&-\hat{\xi}_{ie}^h-\sum_{j \in {\cal N}_i}(\hat{\xi}_{ie}^h-\hat{\xi}_{je}^h)\nonumber \\
        &-\sum_{j \in {\cal N}_i}(\hat{\zeta}_{ie}^h-\hat{\zeta}_{je}^h)%
        +a^h_{ie}\hat{x}_i, \\
        \epsilon \frac{d \hat{\zeta}_{ie}^h}{d \tau}=&\sum_{j \in {\cal N}_i}(\hat{\xi}_{ie}^h-\hat{\xi}_{je}^h),
    \end{align}
\end{subequations}
\begin{subequations}\label{eq:transBoundaryLayerMu}
    \begin{align}
        \epsilon \frac{d \hat{\xi}_{ie}^\mu}{d \tau}=&-\hat{\xi}_{ie}^\mu-\sum_{j \in {\cal N}_i}(\hat{\xi}_{ie}^\mu-\hat{\xi}_{je}^\mu)  \nonumber\\
        &-\sum_{j \in {\cal N}_i}(\hat{\zeta}_{ie}^\mu-\hat{\zeta}_{je}^\mu) 
        +\hat{\mu}_{ie}, \\
        \epsilon \frac{d \hat{\zeta}_{ie}^\mu}{d \tau}=&\sum_{j \in {\cal N}_i}(\hat{\xi}_{ie}^\mu-\hat{\xi}_{je}^\mu).
    \end{align}
\end{subequations}
For the sake of presentation, we denote some variables as follows. Let $\hat{\boldsymbol{\xi}}^h=[\hat{\xi}_{11}^h,...,\hat{\xi}_{n1}^h,\hat{\xi}_{12}^h,...,\hat{\xi}_{n2}^h,...,\hat{\xi}_{nl}^h]^\top$, $\hat{\boldsymbol{\xi}}^{\mu}=[\hat{\xi}_{11}^{\mu},...,\hat{\xi}_{n1}^{\mu},\hat{\xi}_{12}^{\mu},...,\hat{\xi}_{n2}^{\mu},...,\hat{\xi}_{nl}^{\mu}]^\top$, $\hat{\boldsymbol{\zeta}}^h=[\hat{\zeta}_{11}^h,...,\hat{\zeta}_{n1}^h,\hat{\zeta}_{12}^h,...,$\\
$\hat{\zeta}_{n2}^h,...,\hat{\zeta}_{nl}^h]^\top$, $\hat{\boldsymbol{\zeta}}^{\mu}=[\hat{\zeta}_{11}^{\mu},...,\hat{\zeta}_{n1}^{\mu},\hat{\zeta}_{12}^{\mu},...,\hat{\zeta}_{n2}^{\mu},...,\hat{\zeta}_{nl}^{\mu}]^\top$, 
$\hat{\mathbf{u}}^h=[a_{11}^h\hat{x}_1,...,a_{n1}^h\hat{x}_n,a_{12}^h\hat{x}_1,...,a_{n2}^h\hat{x}_n,$ $...,a_{nl}^h\hat{x}_n]^\top$ and $\hat{\boldsymbol{\mu}}=[\hat{\mu}_{11},...,\hat{\mu}_{n1},\hat{\mu}_{12},...,\hat{\mu}_{n2},...,\hat{\mu}_{nl}]^\top \in \mathbb{R}^{nl}$. 
Denote $\hat{\boldsymbol{\xi}}=[(\hat{\boldsymbol{\xi}}^h)^\top,(\hat{\boldsymbol{\xi}}^{\mu})^\top]^\top$, $\hat{\boldsymbol{\zeta}}=[(\hat{\boldsymbol{\zeta}}^h)^\top, (\hat{\boldsymbol{\zeta}}^\mu)^\top]^\top$ and  $\hat{\mathbf{u}}=[(\hat{\mathbf{u}}^h)^\top,(\hat{\boldsymbol{\mu}})^\top]^\top$ $ \in \mathbb{R}^{2nl}$.
We can have the concatenated form for (\ref{eq:transBoundaryLayerX})-(\ref{eq:transBoundaryLayerMu}) as
\begin{equation}\label{eq:conForm}
\epsilon
\begin{bmatrix}
\frac{d \hat{\boldsymbol{\xi}}}{d \tau}\\
\frac{d \hat{\boldsymbol{\zeta}}}{d \tau}
\end{bmatrix}
=
\begin{bmatrix}
-\mathbf{I}_{2l} \otimes (\mathbf{I}+\mathbf{L})& -\mathbf{I}_{2l} \otimes \mathbf{L}\\
\mathbf{I}_{2l} \otimes  \mathbf{L} &\mathbf{0}
\end{bmatrix} \begin{bmatrix}
\hat{\boldsymbol{\xi}}\\
\hat{\boldsymbol{\zeta}}
\end{bmatrix}
+
\begin{bmatrix}
\hat{\mathbf{u}}\\
\mathbf{0}
\end{bmatrix}.
\end{equation}
Let $\hat{\boldsymbol{\xi}}^s$ and $\hat{\boldsymbol{\zeta}}^s$ be the quasi-steady states of $\hat{\boldsymbol{\xi}}$ and $\hat{\boldsymbol{\zeta}}$, respectively.
Let $\hat{\mathbf{v}}=[\hat{\mathbf{x}}^\top,\hat{\boldsymbol{\mu}}^\top]^\top$ and $\mathbf{y}=[\hat{\boldsymbol{\xi}},\hat{\boldsymbol{\zeta}}]^\top-[\hat{\boldsymbol{\xi}}^s,\hat{\boldsymbol{\zeta}}^s]^\top$. Rewriting (\ref{eq:conForm}) in the following form with time $t$ scale:
\begin{equation}
\frac{d \mathbf{y}}{ d t}=\mathbf{g}(t,\hat{\mathbf{v}},\mathbf{y}+[\hat{\boldsymbol{\xi}}^s,\hat{\boldsymbol{\zeta}}^s]^\top,\epsilon).
\end{equation}

\begin{lemma}\label{lemma7}
Let \textit{Assumptions~\ref{assumption1}-\ref{assumption4}} be satisfied. Additionally, $\lambda_{ij_i}$ has positive initialization  for all $i \in {\cal V}$ and $j_i \in {\cal G}_i$. Then, the origin of the boundary-layer model
\begin{equation}\label{eq:boundaryLayerEp}
\frac{d \mathbf{y}}{ d t}=\mathbf{g}(t,\hat{\mathbf{v}},\mathbf{y}+[\hat{\boldsymbol{\xi}}^s,\hat{\boldsymbol{\zeta}}^s]^\top,0)
\end{equation}
is globally exponentially stable, uniformly in $(t,\hat{\mathbf{v}})$. \end{lemma}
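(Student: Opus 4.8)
The plan is to exploit that, once the slow variables $\hat{\mathbf{v}}$ are frozen and $\epsilon$ is set to zero, (\ref{eq:boundaryLayerEp}) is an autonomous \emph{linear} system. Because the quasi-steady state $(\hat{\boldsymbol{\xi}}^s,\hat{\boldsymbol{\zeta}}^s)$ annihilates the affine term of (\ref{eq:conForm}), the error $\mathbf{y}$ obeys the homogeneous equation $\frac{d\mathbf{y}}{dt}=\mathbf{M}\mathbf{y}$, with $\mathbf{M}$ the constant block matrix of (\ref{eq:conForm}). As $\mathbf{M}$ depends on neither $t$ nor $\hat{\mathbf{v}}$, any exponential bound is automatically uniform in $(t,\hat{\mathbf{v}})$, and the lemma reduces to locating the spectrum of $\mathbf{M}$.

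First I would decouple into single consensus blocks: permuting so that each channel's $\hat{\boldsymbol{\xi}}$ and $\hat{\boldsymbol{\zeta}}$ components sit together makes $\mathbf{M}$ block diagonal with $2l$ identical copies of $\mathbf{M}_0=\bigl[\begin{smallmatrix}-(\mathbf{I}_n+\mathbf{L}) & -\mathbf{L}\\ \mathbf{L}&\mathbf{0}\end{smallmatrix}\bigr]$, the matrix of the dynamic average consensus scheme of Section~II. Since both blocks of $\mathbf{M}_0$ are polynomials in the symmetric Laplacian $\mathbf{L}$, I would diagonalize $\mathbf{L}$ and decouple mode by mode: for each Laplacian eigenvalue $\ell_k$ the two eigenvalues $s$ of $\mathbf{M}_0$ solve $s^2+(1+\ell_k)s+\ell_k^2=0$. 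For every $\ell_k>0$---guaranteed for $k\ge 2$ by connectedness (\textit{Assumption~\ref{assumption4}})---the roots have sum $-(1+\ell_k)<0$ and product $\ell_k^2>0$, hence strictly negative real part; the unique mode $\ell_1=0$ (eigenvector $\mathbf{1}_n$) gives $s=-1$ and $s=0$.

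The main obstacle is exactly this simple zero eigenvalue, which prevents $\mathbf{M}$ from being Hurwitz on all of $\mathbb{R}^{4nl}$. It reflects the conserved quantity $\mathbf{1}_n^\top\hat{\boldsymbol{\zeta}}$ in each channel (from $\mathbf{1}_n^\top\mathbf{L}=\mathbf{0}^\top$ one has $\frac{d}{dt}(\mathbf{1}_n^\top\hat{\boldsymbol{\zeta}})=0$), and the corresponding null vector of $\mathbf{M}_0$ is $(\mathbf{0},\mathbf{1}_n)$, which is decoupled from the output $\hat{\boldsymbol{\xi}}$ that feeds back into the slow system (\ref{eq:reducedSystem1})--(\ref{eq:reducedSystem2}). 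I would therefore fix $\hat{\boldsymbol{\zeta}}^s$ so that its per-channel $\mathbf{1}_n$-component equals the conserved value of $\hat{\boldsymbol{\zeta}}$; the error then starts, and by the conservation law remains, in the invariant subspace $\mathcal{W}=\{\mathbf{y}:(\mathbf{I}_{2l}\otimes\mathbf{1}_n^\top)\mathbf{y}_\zeta=\mathbf{0}\}$, where $\mathbf{y}_\zeta$ is the $\hat{\boldsymbol{\zeta}}$-part of $\mathbf{y}$. On $\mathcal{W}$ every eigenvalue of $\mathbf{M}$ lies in the open left half-plane, so $\mathbf{M}|_{\mathcal{W}}$ is Hurwitz and the origin of (\ref{eq:boundaryLayerEp}) is globally exponentially stable, uniformly in $(t,\hat{\mathbf{v}})$.

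To exhibit the decay I would close with the Lyapunov function $W(\mathbf{y})=\tfrac12\|\mathbf{y}\|^2$, whose derivative along (\ref{eq:boundaryLayerEp}) is $\dot W=-\mathbf{y}_\xi^\top\bigl(\mathbf{I}_{2l}\otimes(\mathbf{I}_n+\mathbf{L})\bigr)\mathbf{y}_\xi\le-\|\mathbf{y}_\xi\|^2$, the cross terms cancelling by symmetry of $\mathbf{L}$ ($\mathbf{y}_\xi$ denoting the $\hat{\boldsymbol{\xi}}$-part of $\mathbf{y}$). The largest invariant subset of $\mathcal{W}$ on which $\mathbf{y}_\xi\equiv\mathbf{0}$ is the origin: $\mathbf{y}_\xi\equiv\mathbf{0}$ forces $(\mathbf{I}_{2l}\otimes\mathbf{L})\mathbf{y}_\zeta=\mathbf{0}$, i.e.\ each channel of $\mathbf{y}_\zeta$ lies in $\mathrm{span}(\mathbf{1}_n)$, which with the defining constraint of $\mathcal{W}$ gives $\mathbf{y}_\zeta=\mathbf{0}$. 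Hence LaSalle's invariance principle (equivalently the Hurwitz property above) upgrades the semidefinite bound on $\dot W$ to the claimed global exponential convergence of $\mathbf{y}$ to the origin.
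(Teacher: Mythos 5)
Your proof is correct, and its overall structure parallels the paper's: both arguments isolate the per-channel consensus direction $(\mathbf{0},\mathbf{1}_n)$ as the only neutral mode of the frozen linear dynamics (reflecting the conserved quantity $\mathbf{1}_n^\top\hat{\boldsymbol{\zeta}}$ per channel), remove it, and establish that the remaining dynamics are governed by a constant Hurwitz matrix, which makes the exponential bound uniform in $(t,\hat{\mathbf{v}})$ automatically. Where the two treatments genuinely differ is in execution. The paper performs an orthogonal change of coordinates $\mathbf{U}=[\mathbf{U}_1,\delta\mathbf{1}_n]$ that splits $\hat{\boldsymbol{\zeta}}$ into $\hat{\boldsymbol{\zeta}}_1$ and a constant component $\hat{\boldsymbol{\zeta}}_2$, writes the reduced system with matrix $\mathbf{A}$, and then \emph{cites} Lemma~3 of \cite{Durr2012} for the fact that $\mathbf{A}$ is Hurwitz. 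You instead prove the spectral fact from scratch: block-diagonalize into $2l$ copies of the dynamic-average-consensus block, diagonalize the symmetric Laplacian, read off the per-mode characteristic polynomial $s^2+(1+\ell_k)s+\ell_k^2$, and apply Routh--Hurwitz, with a Lyapunov/LaSalle argument ($W=\tfrac12\|\mathbf{y}\|^2$, cross terms cancelling by symmetry of $\mathbf{L}$) as independent confirmation; this makes your proof self-contained and more transparent about \emph{why} connectedness (\textit{Assumption~\ref{assumption4}}) is what kills all but one mode. You are also more explicit than the paper about a real subtlety: the quasi-steady state is not unique (any per-channel shift of $\hat{\boldsymbol{\zeta}}^s$ along $\mathbf{1}_n$ is again an equilibrium), so the claimed global exponential stability of the origin holds only after pinning the $\mathbf{1}_n$-component of $\hat{\boldsymbol{\zeta}}^s$ to the conserved value, i.e., on the invariant subspace $\mathcal{W}$; the paper handles this implicitly by simply excluding the constant $\hat{\boldsymbol{\zeta}}_2$ from the boundary-layer state. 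One small caution: LaSalle by itself only yields asymptotic convergence, so your final claim of \emph{exponential} stability must rest on the eigenvalue computation (Hurwitz on $\mathcal{W}$), as you indeed note parenthetically.
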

\begin{proof} Let us define an orthogonal matrix $\mathbf{U} \in \mathbb{R}^{n\times n}$ such that 
$\mathbf{U}=[\mathbf{u}_1,...,\mathbf{u}_n]=[\mathbf{U}_1,\mathbf{u}_n]$,
where $\mathbf{u}_i \in \mathbb{R}^n$, $\mathbf{U}_1\in \mathbb{R}^{n \times (n-1)}$, $\mathbf{u}_i^\top\mathbf{u}_j=0$ and $\mathbf{u}_n=\delta \mathbf{1}_n$, where $\delta >0$ is a positive constant, $i, j \in \{1,...,n\}$ and $i \neq j$.
Define $[(\hat{\boldsymbol{\zeta}}_{11})^\top,\hat{\boldsymbol{\zeta}}_{12},...,(\hat{\boldsymbol{\zeta}}_{(2l)1})^\top,\hat{\boldsymbol{\zeta}}_{(2l)2}]^\top=(\mathbf{I}_{2l}\otimes \mathbf{U}^\top) \hat{\boldsymbol{\zeta}}$ where $\hat{\boldsymbol{\zeta}}_{i1}\in \mathbb{R}^{n-1}$ and $\hat{\boldsymbol{\zeta}}_{i2} \in \mathbb{R}^{}$, $i \in \{1,...,2l\}$. Denote $\hat{\boldsymbol{\zeta}}_1=[(\hat{\boldsymbol{\zeta}}_{11})^\top,...,(\hat{\boldsymbol{\zeta}}_{(2l)1})^\top]^\top$ and $\hat{\boldsymbol{\zeta}}_2=[\hat{\boldsymbol{\zeta}}_{12},...,\hat{\boldsymbol{\zeta}}_{(2l)2}]^\top$. Then, (\ref{eq:conForm}) can be rewritten as 
\begin{equation}\label{eq:reducedOrderBound}
\epsilon
\begin{bmatrix}
\frac{d \hat{\boldsymbol{\xi}}}{d \tau}\\
\frac{d \hat{\boldsymbol{\zeta}}_1}{d \tau}
\end{bmatrix}
=
\begin{bmatrix}
-\mathbf{I}_{2l} \otimes (\mathbf{I}+\mathbf{L})& -\mathbf{I}_{2l} \otimes \mathbf{L}\mathbf{U}_1\\
\mathbf{I}_{2l} \otimes \mathbf{U}_1^\top \mathbf{L} &\mathbf{0}
\end{bmatrix} \begin{bmatrix}
\hat{\boldsymbol{\xi}} \\
\hat{\boldsymbol{\zeta}}_1
\end{bmatrix}
+
\begin{bmatrix}
\hat{\mathbf{u}}\\
\mathbf{0}
\end{bmatrix},
\end{equation}
and 
\begin{equation}
\epsilon \frac{d \hat{\boldsymbol{\zeta}}_2}{d \tau}=\mathbf{0}.
\end{equation}
Define $[(\hat{\boldsymbol{\zeta}}_{11}^s)^\top,\hat{\boldsymbol{\zeta}}_{12}^s,...,(\hat{\boldsymbol{\zeta}}_{(2l)1}^s)^\top,\hat{\boldsymbol{\zeta}}_{(2l)2}^s]^\top=(\mathbf{I}_{2l}\otimes \mathbf{U}^\top) \hat{\boldsymbol{\zeta}}^s$ where $\hat{\boldsymbol{\zeta}}_{i1}^s\in \mathbb{R}^{n-1}$ and $\hat{\boldsymbol{\zeta}}_{i2}^s \in \mathbb{R}^{}$, $i \in \{1,...,2l\}$. Denote $\hat{\boldsymbol{\zeta}}_1^s=[(\hat{\boldsymbol{\zeta}}_{11}^s)^\top,...,(\hat{\boldsymbol{\zeta}}_{(2l)1}^s)^\top]^\top$ and $\hat{\boldsymbol{\zeta}}_2^s=[\hat{\boldsymbol{\zeta}}_{12}^s,...,\hat{\boldsymbol{\zeta}}_{(2l)2}^s]^\top$. Since $\hat{\boldsymbol{\xi}}^s$ and $\hat{\boldsymbol{\zeta}}^s$ are quasi-steady states of $\hat{\boldsymbol{\xi}}$ and $\hat{\boldsymbol{\zeta}}$, then we can have
\begin{equation}\label{eq:equilibriumBounadry}
\underbrace{
\begin{bmatrix}
-\mathbf{I}_{2l} \otimes (\mathbf{I}+\mathbf{L})& -\mathbf{I}_{2l} \otimes \mathbf{L}\mathbf{U}_1\\
\mathbf{I}_{2l} \otimes \mathbf{U}_1^\top \mathbf{L} &\mathbf{0}
\end{bmatrix}
}_{\coloneqq \mathbf{A}}
 \begin{bmatrix}
\hat{\boldsymbol{\xi}}^s\\
\hat{\boldsymbol{\zeta}}_1^s
\end{bmatrix}
+
\begin{bmatrix}
\hat{\mathbf{u}}\\
\mathbf{0}
\end{bmatrix}=\mathbf{0}.
\end{equation}
Let $\mathbf{y}_1= \hat{\boldsymbol{\xi}}-\hat{\boldsymbol{\xi}}^s$ and $\mathbf{y}_2=\hat{\boldsymbol{\zeta}}_1-\hat{\boldsymbol{\zeta}}_1^s$.
Then, (\ref{eq:reducedOrderBound}) can have the form as follows:
\begin{equation}
\begin{split}
\epsilon
\begin{bmatrix}
\frac{d \mathbf{y}_1 }{d \tau}\\
\frac{d \mathbf{y}_2}{d \tau}
\end{bmatrix}
=&
\mathbf{A} \begin{bmatrix}
\mathbf{y}_1+\hat{\boldsymbol{\xi}}^s\\
\mathbf{y}_2+\hat{\boldsymbol{\zeta}}_1^s
\end{bmatrix}
+
\begin{bmatrix}
\hat{\mathbf{u}}\\
\mathbf{0}
\end{bmatrix}-\epsilon
\begin{bmatrix}
\frac{d \hat{\boldsymbol{\xi}}^s }{d \tau} \\
\frac{d \hat{\boldsymbol{\zeta}}_1^s}{d \tau}
\end{bmatrix}\\
=&
\mathbf{A} \begin{bmatrix}
\mathbf{y}_1+\hat{\boldsymbol{\xi}}^s\\
\mathbf{y}_2+\hat{\boldsymbol{\zeta}}_1^s
\end{bmatrix}
+
\begin{bmatrix}
\hat{\mathbf{u}}\\
\mathbf{0}
\end{bmatrix}-\epsilon
\begin{bmatrix}
\frac{\partial \hat{\boldsymbol{\xi}}^s }{\partial \hat{\mathbf{v}}} \\
\frac{\partial \hat{\boldsymbol{\zeta}}_1^s}{\partial \hat{\mathbf{v}}}
\end{bmatrix}\frac{d \hat{\mathbf{v}}}{ d\tau}.
\end{split}
\end{equation}
Hence, in $t$-time scale
\begin{equation}
\begin{bmatrix}
\frac{d \mathbf{y}_1 }{d t}\\
\frac{d \mathbf{y}_2}{d t}
\end{bmatrix}
=
\mathbf{A} \begin{bmatrix}
\mathbf{y}_1+\hat{\boldsymbol{\xi}}^s\\
\mathbf{y}_2+\hat{\boldsymbol{\zeta}}_1^s
\end{bmatrix}
+
\begin{bmatrix}
\hat{\mathbf{u}}\\
\mathbf{0}
\end{bmatrix}-\epsilon
\begin{bmatrix}
\frac{\partial \hat{\boldsymbol{\xi}}^s }{\partial \hat{\mathbf{v}}} \\
\frac{\partial \hat{\boldsymbol{\zeta}}_1^s}{\partial \hat{\mathbf{v}}}
\end{bmatrix}\frac{d \hat{\mathbf{v}}}{ d\tau}.
\end{equation}
In time $t$ scale, letting $\epsilon=0$ and due to (\ref{eq:equilibriumBounadry}), we can have 
\begin{equation}
\begin{bmatrix}
\frac{d \mathbf{y}_1 }{d t}\\
\frac{d \mathbf{y}_2}{d t}
\end{bmatrix}
=
\mathbf{A} \begin{bmatrix}
\mathbf{y}_1\\
\mathbf{y}_2
\end{bmatrix}.
\end{equation}
Since the matrix $\mathbf{A}$ is Hurwitz by \textit{Lemma~3} in \cite{Durr2012}, the origin of the boundary-layer model (\ref{eq:boundaryLayerEp}) is globally exponentially stable, uniformly in $(t,\hat{\mathbf{v}})$.
\end{proof}

The following theorem studies the semi-globally practically asymptotically (SPA) stability \cite{Tan2006} of the proposed model. 
\begin{theorem} \label{theorem8}
Let \textit{Assumptions~\ref{assumption1}-\ref{assumption4}} be satisfied. Additionally, $\lambda_{ij_i}$ has positive initialization  for all $i \in {\cal V}$ and $j_i \in {\cal G}_i$. Then, the system described by (\ref{eq:boundaryLayerX}), (\ref{eq:boundaryLayerMu}) and (\ref{eq:reducedSystem}) is SPA stable.
\end{theorem}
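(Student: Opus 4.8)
The plan is to recognize the overall system (\ref{eq:boundaryLayerX}), (\ref{eq:boundaryLayerMu}), (\ref{eq:reducedSystem}) as a standard two-time-scale singularly perturbed system and then invoke the SPA stability result of \cite{Tan2006}. In the shifted coordinates the slow state is $\hat{\mathbf{w}}=[\hat{\mathbf{x}}^\top,\tilde{\boldsymbol{\mu}}^\top,\hat{\boldsymbol{\lambda}}^\top]^\top$ governed, in the $\tau=\epsilon t$ scale, by the reduced dynamics (\ref{eq:transformReducedModelS}), while the fast state is $\mathbf{y}$ collecting the consensus-estimator errors $\hat{\boldsymbol{\xi}},\hat{\boldsymbol{\zeta}}_1$ and governed by the boundary-layer dynamics (\ref{eq:boundaryLayerEp}). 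The quasi-steady-state manifold is $\mathbf{y}=0$, on which the estimators deliver the exact averages $\overline{\mu}_e$ and $\overline{h}_e$, so that the reduced model (\ref{eq:ReducedModelS}) is recovered.

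First I would verify the two standard hypotheses of the singular perturbation theorem. The reduced-system requirement, namely global asymptotic stability of the origin of (\ref{eq:ReducedModelS}), is exactly \textit{Lemma~\ref{lemma6}}. The boundary-layer requirement, namely global exponential stability of the origin of (\ref{eq:boundaryLayerEp}) uniformly in $(t,\hat{\mathbf{v}})$, is exactly \textit{Lemma~\ref{lemma7}}; crucially, the decay rate there is dictated by the Hurwitz matrix $\mathbf{A}$, which does not depend on the slow state, supplying precisely the uniformity the theorem demands.

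Next I would check the regularity and interconnection conditions. Because each $f_i\in C^2$ with $\partial f_i/\partial x_i$ strictly increasing, and all constraints are affine, the right-hand sides of both the slow and the fast subsystems are continuously differentiable and locally Lipschitz in $(\hat{\mathbf{w}},\mathbf{y})$, uniformly in $t$; moreover the quasi-steady-state maps $\hat{\boldsymbol{\xi}}^s,\hat{\boldsymbol{\zeta}}_1^s$ determined by (\ref{eq:equilibriumBounadry}) are smooth in $\hat{\mathbf{v}}$ since $\mathbf{A}$ is nonsingular, and the slow-fast coupling enters only through these smooth terms. These are the growth and smoothness premises under which the SPA stability theorem of \cite{Tan2006} applies to the family parameterized by $\epsilon$.

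With all hypotheses in place, the theorem yields the conclusion: for every prescribed pair consisting of a (large) compact set of initial conditions and a (small) target neighborhood of the origin, there exists $\epsilon^*>0$ such that for all $\epsilon\in(0,\epsilon^*)$ the trajectories of the full system starting in that set converge to that neighborhood, which is the definition of SPA stability and establishes the claim. The main obstacle I anticipate is not the invocation itself but the verification that the Lyapunov function $V(\hat{\mathbf{w}})$ of \textit{Lemma~\ref{lemma6}} behaves well on the restricted domain $\mathbb{R}_{>-\boldsymbol{\lambda}^*}$ of the multiplier errors: one must ensure that the requisite radial unboundedness and decrescence estimates, together with uniform-in-$\hat{\mathbf{v}}$ bounds on the quasi-steady-state terms, hold globally enough that the semi-global quantifier is genuinely attained and is not curtailed by the logarithmic barrier appearing in $V(\hat{\mathbf{w}})$.
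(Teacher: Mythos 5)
Your proposal follows essentially the same route as the paper: the paper's proof simply cites \textit{Lemma~\ref{lemma6}} for global asymptotic stability of the reduced model, \textit{Lemma~\ref{lemma7}} for uniform global exponential stability of the boundary-layer model, and then invokes \textit{Lemma~1} in the Appendix of \cite{Tan2006} to conclude SPA stability. Your additional verification of the regularity and interconnection conditions (and your caveat about the logarithmic term in $V(\hat{\mathbf{w}})$ restricting the domain of the multiplier errors) is more careful than the paper's two-line argument, but it is the same decomposition and the same key citation.
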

\begin{proof} \textit{Lemma~\ref{lemma6}} points out that the reduced model (\ref{eq:ReducedModelS}) is globally asymptotically stable. We also have \textit{Lemma~\ref{lemma7}} which implies that the origin of the boundary-layer model (\ref{eq:boundaryLayerEp}) is globally exponentially stable, uniformly in $(t,\hat{\mathbf{v}})$. Then, we can apply \textit{Lemma~1} in the Appendix of \cite{Tan2006} and can conclude that the system described by (\ref{eq:boundaryLayerX}), (\ref{eq:boundaryLayerMu}) and (\ref{eq:reducedSystem}) is SPA stable. \end{proof}

Roughly speaking, the SPA stability can be interpreted as given a sufficiently large set of initial conditions ${\cal B}_{I}$ for $(\mathbf{x},\mu_{11},...,\mu_{n1},...,\mu_{1l},...,\mu_{nl},\boldsymbol{\lambda})$ and a sufficiently small neighborhood ${\cal B}_{N}$ of $(\mathbf{x}^*,\underbrace{\mu_1^*,...,\mu_1^*}_{\textrm{n elements}},...,\underbrace{\mu_l^*,...,\mu_l^*}_{\textrm{n elements}},\boldsymbol{\lambda}^*)$, it is possible to adjust the  parameter $\epsilon$ so that all solutions starting from the set ${\cal B}_{I}$ eventually converge to ${\cal B}_{N}$.

\section{Application to Energy Network via Numerical Simulation}
We consider a network of 8 generators communicating over an undirected and connected graph as depicted in Fig. 1.
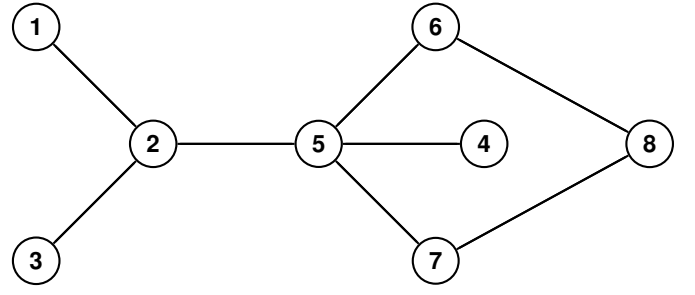
\begin{figure}[!h]
\centering
\begin{tikzpicture}[>=stealth',shorten >=1pt,auto,node distance=2.2cm,
                    thick,main node/.style={circle,draw,font=\sffamily\small\bfseries}]
  
  \node[main node] (2)  {2};
  \node[main node] (1) [above left of=2]{1};
  \node[main node] (3) [below left of=2] {3};      
  \node[main node] (5) [right of=2] {5};
  \node[main node] (7) [below right of=5] {7};
  \node[main node] (4) [right of=5] {4};
  \node[main node] (8) [right of=4] {8};
  \node[main node] (6) [above right of=5] {6};
  \path[thick]  
    (1) edge node  {} (2)
    (2) edge node {} (1)
        edge node  {} (3)   
        edge node  {} (5)   
   (3)   edge node {} (2) 
   (4)   edge node {} (5)
   (5)   edge node {} (2)
         edge node {} (4)
         edge node {} (7)
         edge node {} (6)
    (6)   edge node {} (8)
          edge node {} (5)
    (7)   edge node {} (8)
          edge node {} (5)
    (8)   edge node {} (7)
             edge node {} (6)
        ;            
\end{tikzpicture}
\caption{Communication topology for the simulation.}
\end{figure}
Let $x_i$ be the power generation and $x_i^d$ be the power demand of node $i$. Each generator has a generation cost function $f_i(x_i)=a_ix_i^2+b_ix_i+c_i$, $a_i>0$. It is straightforward to see that the considered quadratic cost function is strictly convex.
We set $\{a_1,...,a_8\}=\{1,3,1,1,1,2,1,1\}$, $\{b_1,...,b_8\}=\{-5,-10,-10,-5,-2,-5,-5,-5\}$, and $c_i=0$ for all $i\in {\cal V}$. We now study two cases which can be encountered in real energy networks.
\subsection{Simulation Case 1}
We consider the case in which the 8 generators cooperatively minimize the total generation cost function in a distributed manner while satisfying supply-demand balance constraint as $\sum_{i=1}^3 x_i=\sum_{i=1}^3 x_i^{d}$ and $\sum_{i=4}^8 x_i=\sum_{i=4}^8 x_i^{d}$.  The power demand at each bus (in p.u.) is given as $\{x_1^d,...,x_8^d\}=\{0.51,0.52,0.53,0.54,0.55,0.56,0.57,0.58 \}$. 
By setting up the scenario, the 8 generators can be divided into 2 clusters; the first cluster has generators 1, 2, and 3, while generators 4, 5, 6, 7, and 8 are in the second one. Each cluster should be able to supply power for it. The simulation results for this case are shown in Fig. 2 and Fig. 3. As can be seen, by applying our algorithm the generators can find the optimal values to minimize the overall cost of generation, while the supply-demand balance for each cluster is satisfied in  Fig. 3.

\begin{figure}[!h]
\centering
\includegraphics[scale=0.65]{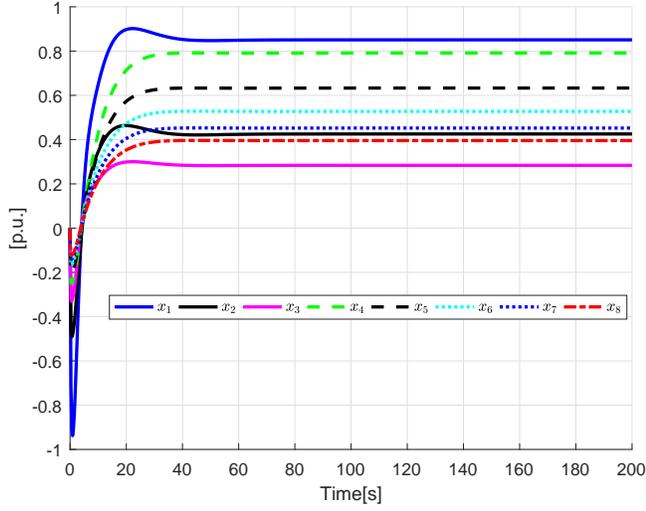}
\caption{Optimal power generation of the simulation case 1.}
\end{figure}

\begin{figure}[!h]
\centering
\includegraphics[scale=0.65]{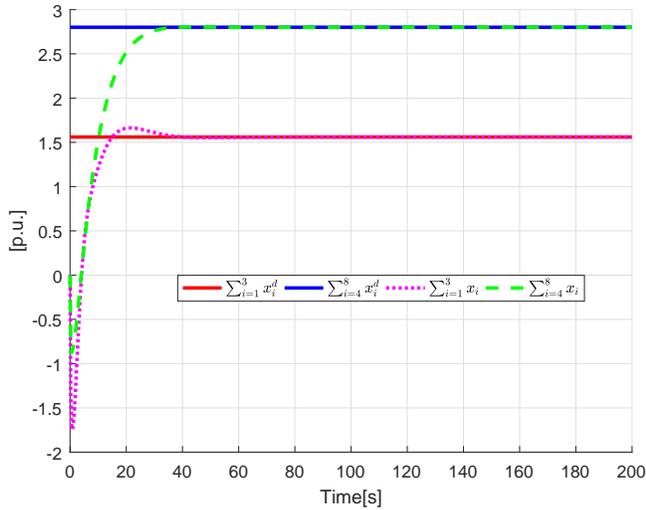}
\caption{Supply-demand balance for cluster of generator 1, 2, and 3 and cluster of generator 4, 5, 6, 7, and 8 of the simulation case 1.}
\end{figure}
\subsection{Simulation Case 2}
We suppose that each generator has a lower and an upper limit capacity, i.e., ${x_i^m} \leq x_i \leq {x_i^M}$. The limit generation capacity (in p.u.) at each generator is given by $0.7\leq x_1\leq 0.9$, $0.3\leq x_2\leq 0.9$, $0.4\leq x_3\leq 0.9$, $0.1\leq x_4\leq 1.0$, $0.1\leq x_5\leq 1.0$, $0.1\leq x_6\leq 1.0$, $0.1\leq x_7\leq 0.9$, and $0.1\leq x_8\leq 0.7$. The power demand is given the same as in the simulation case 1. The energy networked system has supply-demand balance constraint $\sum_{i=1}^8 x_i=\sum_{i=1}^8 x_i^d$ that the simulation results shown in Fig. 5 demonstrate the correctness. Fig. 4 depicts the optimal power generation seeking. As can be seen in Fig. 4, the generators' power generations converge to the optimal values while honoring the limit capacity constraints.
\begin{figure}[!h]
\centering
\includegraphics[scale=0.65]{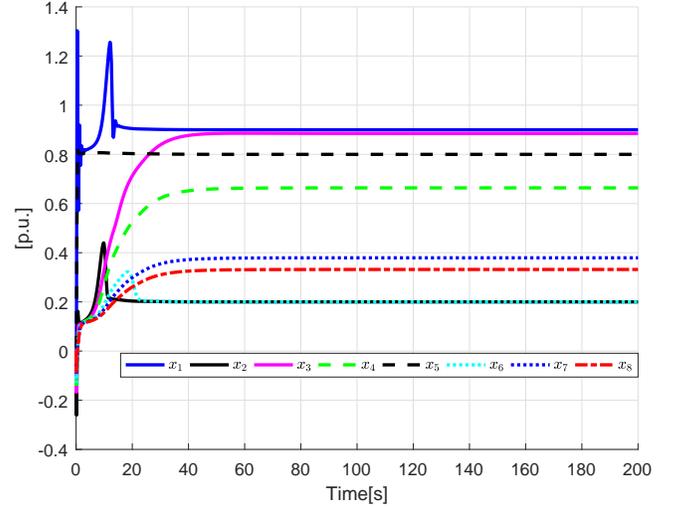}
\caption{Optimal power generation of the simulation case 2.}
\end{figure}

\begin{figure}[!h]
\centering
\includegraphics[scale=0.65]{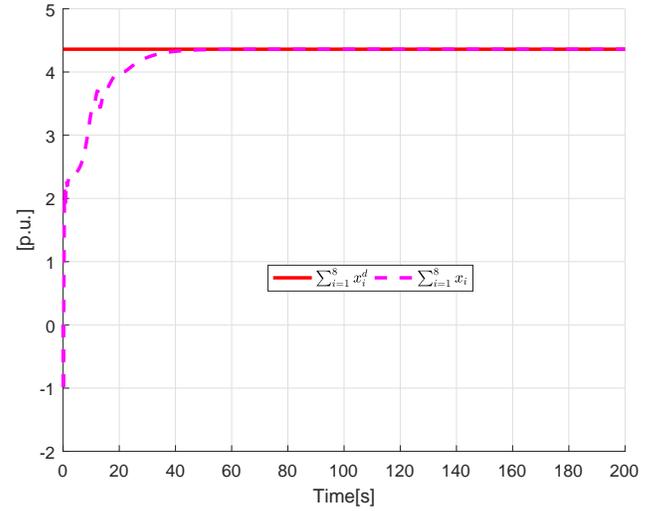}
\caption{Power supply-demand balance of the simulation case 2.}
\end{figure}
\section{Conclusion and Future Work}
We have presented a distributed algorithm for constrained optimization with affine constraints. The fully distributed algorithm is inspired from singular perturbation, dynamic average consensus, and saddle point dynamics methods. The private information of agents in the networked system is guaranteed because we use the dynamic average consensus protocol to estimate average information in boundary-layer systems. The well-developed singular perturbation theory allows us to provide a rigorous analysis on the non-local stability of our proposed algorithm. As demonstrated in the two simulations, the distributed solution can be applied into energy networks. As a future work, we will consider some uncertainties in exchanged information between neighboring nodes.

\section*{Acknowledgment}
The research of this paper has been supported by the National Research Foundation (NRF) of Korea under the grant NRF-2017R1A2B3007034.

\end{document}